\title[Log surfaces]{On minimal model 
theory for algebraic log surfaces}
\author{Osamu Fujino}
\date{2020/12/2, version 0.10}
\subjclass[2010]{Primary 14E30; Secondary 14J17}
\keywords{log surfaces, log canonical surfaces, MR log canonical 
surfaces, generalized MR log canonical surfaces, 
minimal model program, abundance theorem} 
\address{Department of Mathematics, Graduate School of Science, 
Osaka University, Toyonaka, Osaka 560-0043, Japan}
\email{fujino@math.sci.osaka-u.ac.jp}
\DeclareMathOperator{\Supp}{Supp}
\DeclareMathOperator{\Pic}{Pic}
\DeclareMathOperator{\NE}{\overline{NE}}
\newtheorem{thm}{Theorem}[section]
\newtheorem{lem}[thm]{Lemma}
\newtheorem{cor}[thm]{Corollary}
\theoremstyle{definition}
\newtheorem{ex}[thm]{Example}
\newtheorem{defn}[thm]{Definition}
\newtheorem*{ack}{Acknowledgments}  
\newtheorem{case}{Case}
\begin{document}

\begin{abstract}
We introduce the notion of generalized MR log canonical 
surfaces and establish the minimal model theory for 
generalized MR log canonical surfaces  
in full generality. 
\end{abstract}

\maketitle 
\tableofcontents

\section{Introduction}\label{a-sec1}

In this short paper, we give some remarks 
on the minimal model theory for log surfaces defined over 
an algebraically closed field $k$ of any characteristic. 
We recall the definition of {\em{log surfaces}} following \cite{fujino-surfaces}. 

\begin{defn}[Log surfaces]\label{a-def1.1} 
Let $X$ be a normal algebraic surface and let $\Delta$ be an 
effective $\mathbb R$-divisor whose coefficients 
are less than or equal to one such that 
$K_X+\Delta$ is $\mathbb R$-Cartier. 
Then $(X, \Delta)$ is called a {\em{log surface}}. 
If $(X, \Delta)$ is a log surface and $X$ is $\mathbb Q$-factorial, 
then we simply say that $(X, \Delta)$ is 
a {\em{$\mathbb Q$-factorial log surface}}. 
\end{defn}

In \cite{fujino-surfaces} and \cite{tanaka} (see also \cite{fujino-tanaka}), 
Osamu Fujino and Hiromu Tanaka established the 
minimal model theory for $\mathbb Q$-factorial log surfaces 
in full generality. We note that a $\mathbb Q$-factorial 
log surface is not necessarily log canonical. 

\begin{defn}[Log canonical surfaces]\label{a-def1.2}
Let $(X, \Delta)$ be a log surface and let $f\colon Y\to X$ 
be a proper birational morphism 
from a normal surface $Y$. 
In this situation, we can write $$K_Y+\Delta_Y=f^*(K_X+\Delta)$$ with 
$f_*\Delta_Y=\Delta$. 
If the coefficients of $\Delta_Y$ are less than or equal to 
one for every $f\colon Y\to X$, then $(X, \Delta)$ 
is called a {\em{log canonical surface}}. 
\end{defn}

We note that a log canonical surface is not always 
$\mathbb Q$-factorial. Therefore, we can not directly apply 
the minimal model theory for $\mathbb Q$-factorial log 
surfaces to log canonical surfaces. Fortunately, however, 
we know that the minimal model theory works in full generality 
for log canonical surfaces by \cite{fujino-surfaces} and \cite{tanaka}. 

Let $\varphi\colon  X\to W$ be a $(K_X+\Delta)$-negative 
extremal birational contraction morphism 
from a log canonical surface $(X, \Delta)$. 
Then the exceptional locus of $\varphi$ is disjoint from 
non-rational singular points of $X$ (see \cite[Section 4]{fujino-rational}). 
This is a key point of the 
minimal model theory for (not necessarily $\mathbb Q$-factorial) 
log canonical surfaces. 

\medskip 

In \cite{alexeev}, Valery Alexeev introduced the notion of 
{\em{MR log canonical surfaces}}, which is a 
slight generalization of that of log canonical surfaces. 
Then, in \cite[Section 10]{alexeev}, he sketched 
the minimal model theory for MR log canonical surfaces. 

\begin{defn}[MR log canonical surfaces]\label{a-def1.3}
Let $(X, \Delta)$ be a log surface and let $f\colon Y\to X$ be 
the unique minimal resolution of singularities of 
$X$. Then we can write 
$$K_Y+\Delta_Y=f^*(K_X+\Delta)$$ with 
$f_*\Delta_Y=\Delta$. If 
the coefficients of $\Delta_Y$ are less than or equal to 
one, 
then we say that 
$(X, \Delta)$ is an {\em{MR log canonical}} 
({\em{MRLC}}, for short) {\em{surface}}. 
\end{defn}

We note that $\Delta_Y$ in Definition \ref{a-def1.3} is always 
effective by the negativity lemma since $f\colon Y\to X$ is the minimal 
resolution of singularities of $X$. 
By definition, a log canonical surface 
is obviously an MR log canonical surface. 
However, an easy example (see Example \ref{a-ex3.3}) shows that 
there exists an MR log canonical surface which is not 
log canonical. Therefore, the minimal model theory 
established in \cite{fujino-surfaces} and \cite{tanaka} 
(see also \cite{fujino-tanaka}) does not cover 
the minimal model theory for MR log canonical surfaces 
sketched in \cite[Section 10]{alexeev}. In this paper, we introduce 
the following new notion of {\em{generalized MR log canonical surfaces}}. 

\begin{defn}[Generalized MR log canonical surfaces]\label{a-def1.4}
Let $(X, \Delta)$ be a log surface. 
If there exists a proper birational morphism 
$f\colon Y\to X$ from a $\mathbb Q$-factorial 
normal surface $Y$ such that $\Delta_Y$ 
is an effective $\mathbb R$-divisor on $Y$ and 
that 
the coefficients of $\Delta_Y$ are less than or equal to one, 
where $$K_Y+\Delta_Y=f^*(K_X+\Delta)$$ with $f_*\Delta_Y=\Delta$, 
then we say that 
$(X, \Delta)$ is a {\em{generalized MR log canonical}} 
({\em{GMRLC}}, for short) {\em{surface}}. 
\end{defn}

By definition, $\mathbb Q$-factorial log 
surfaces, log canonical surfaces, and MR log 
canonical surfaces are all generalized MR log 
canonical surfaces. 
Then we establish the minimal model theory for 
generalized MR log canonical surfaces as follows. 

\begin{thm}[Minimal model program for generalized 
MR log canonical surfaces]\label{a-thm1.5}
Let $(X, \Delta)$ be a generalized MR log canonical 
surface and let $\pi\colon X\to S$ be 
a projective morphism onto a variety $S$. 
Then we can run the minimal model program over $S$ with 
respect to $K_X+\Delta$ by the cone and contraction theorem. 
Hence there exists a sequence of at most $\rho(X/S)-1$ 
birational contractions starting from $(X, \Delta)$
$$
(X, \Delta)=:(X_0, \Delta_0)\overset{\varphi_0}{\longrightarrow} 
(X_1, \Delta_1) \overset{\varphi_1}{\longrightarrow} 
\cdots 
\overset{\varphi_{l-1}}{\longrightarrow} 
(X_l, \Delta_l)=:(X^*, \Delta^*)
$$ 
over $S$ 
such that $(X_i ,\Delta_i)$, where 
$\Delta_i:={\varphi_{i-1}}_*\Delta_{i-1}$, 
is generalized MR log canonical 
for every $i$ and that one of the followings 
holds. 
\begin{itemize}
\item[(i)] {\em{(Good minimal model).}}~If 
$K_X+\Delta$ is pseudo-effective 
over $S$, 
then $K_{X^*}+\Delta^*$ is semi-ample over $S$. 
\item[(ii)] {\em{(Mori fiber space).}}~If 
$K_X+\Delta$ is not pseudo-effective over $S$, 
then there is a contraction morphism 
$g\colon X^*\to W$ over $S$ onto a normal variety $W$, which is 
projective over $S$, with 
connected fibers such that 
$-(K_{X^*}+\Delta^*)$ is $g$-ample, $\dim W<2$, 
and the relative Picard number $\rho(X^*/W)$ is one.  
\end{itemize}
Moreover, we have the following properties. 
\begin{itemize}
\item[(1)] If $(X, \Delta)$ is log canonical, then 
so is $(X_i, \Delta_i)$ for every $i$. 
\item[(2)] If $X$ is $\mathbb Q$-factorial, 
then so is $X_i$ for every $i$. 
\item[(3)] If $(X, \Delta)$ is MR log canonical, 
then so is $(X_i,\Delta_i)$ for every $i$. 
\end{itemize}
\end{thm}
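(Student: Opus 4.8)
The plan is to reduce everything to the already-established minimal model theory for $\mathbb Q$-factorial log surfaces. By the definition of a generalized MR log canonical surface, I fix a proper birational morphism $f\colon Y\to X$ from a $\mathbb Q$-factorial normal surface together with $K_Y+\Delta_Y=f^*(K_X+\Delta)$ and $f_*\Delta_Y=\Delta$, where $\Delta_Y\ge 0$ has coefficients $\le 1$; thus $(Y,\Delta_Y)$ is a $\mathbb Q$-factorial log surface and $\pi\circ f\colon Y\to S$ is projective. I will run the minimal model program for $(Y,\Delta_Y)$ over $S$, which is available in full generality by \cite{fujino-surfaces}, \cite{tanaka} (see also \cite{fujino-tanaka}), and transport each step to $(X,\Delta)$, maintaining at every stage a crepant birational morphism $f_i\colon Y_i\to X_i$ from a $\mathbb Q$-factorial log surface $(Y_i,\Delta_{Y_i})$ with $K_{Y_i}+\Delta_{Y_i}=f_i^*(K_{X_i}+\Delta_i)$.

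First I would transfer the cone theorem from $Y$ to $X$. Since $f$ is proper birational, $f_*$ maps $\NE(Y/S)$ onto $\NE(X/S)$, and the projection formula together with $K_Y+\Delta_Y=f^*(K_X+\Delta)$ gives $(K_Y+\Delta_Y)\cdot\gamma=(K_X+\Delta)\cdot f_*\gamma$ for every $1$-cycle $\gamma$ on $Y$. In particular every $f$-exceptional curve is $(K_Y+\Delta_Y)$-numerically trivial, so no $(K_Y+\Delta_Y)$-negative extremal ray of $Y$ is spanned by $f$-exceptional curves. Pushing the cone decomposition of $\NE(Y/S)$ forward by $f_*$ then yields the cone decomposition of $\NE(X/S)$, in which each $(K_X+\Delta)$-negative extremal ray $R$ equals $f_*R_Y$ for a unique non-$f$-exceptional $(K_Y+\Delta_Y)$-negative extremal ray $R_Y$ of $Y$.

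The heart of the argument, and the step I expect to be the main obstacle, is the contraction together with its descent to the (possibly non-$\mathbb Q$-factorial) surface $X$. Given $R$ and its lift $R_Y$, contracting $R_Y$ by the contraction theorem for $(Y,\Delta_Y)$ produces a projective morphism $g\colon Y\to Y'$ over $S$ with $Y'$ again $\mathbb Q$-factorial; as no flips occur for surfaces this is divisorial or of fiber type. I would then construct $\varphi\colon X\to X'$ as the contraction of the curve $f(R_Y)$ on $X$ and obtain $f'\colon Y'\to X'$ from the rigidity lemma, since $\varphi\circ f$ contracts the fibers of $g$; equivalently $X'$ is obtained from $Y$ by contracting the $f$-exceptional locus together with the $R_Y$-curve simultaneously. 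The difficulty is precisely that $\varphi$ must exist as a morphism onto a normal surface without assuming $X$ to be $\mathbb Q$-factorial. This is where the $\mathbb Q$-factorial model is indispensable: the curves to be contracted on $X$ are realised on $Y$, where contractibility and negativity are available, and the fact that their contraction descends to $X$ parallels the log canonical case recalled in the introduction (see \cite[Section 4]{fujino-rational}), with the negativity lemma ensuring that the exceptional locus of $\varphi$ stays away from the bad points of $X$. Once $\varphi$ and $f'$ are in place, $\Delta_{Y'}:=g_*\Delta_Y$ is effective with coefficients $\le 1$ and satisfies $K_{Y'}+\Delta_{Y'}=f'^*(K_{X'}+\Delta')$ with $\Delta':=\varphi_*\Delta$, so $(X',\Delta')$ is again generalized MR log canonical, witnessed by $f'$.

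Iterating replaces the $(K_X+\Delta)$-program by the $(K_Y+\Delta_Y)$-program on the models, so termination and the bound of at most $\rho(X/S)-1$ divisorial steps follow from the $\mathbb Q$-factorial case on $Y$. At the end $(Y^*,\Delta_{Y^*})$ is either a minimal model or a Mori fiber space. If $K_X+\Delta$ is pseudo-effective over $S$, then so is $K_{Y^*}+\Delta_{Y^*}=f_l^*(K_{X^*}+\Delta^*)$, which is therefore semi-ample over $S$ by the abundance theorem for $\mathbb Q$-factorial log surfaces; being a pullback, its semi-ampleness descends to $K_{X^*}+\Delta^*$, giving (i). Otherwise the fiber-type contraction on $Y^*$ descends through $f_l$ to the desired Mori fiber space $g\colon X^*\to W$ with $\dim W<2$, with $-(K_{X^*}+\Delta^*)$ being $g$-ample, and $\rho(X^*/W)=1$, giving (ii). Finally, I would check that properties (1)--(3) are preserved at each step: when $X$ is $\mathbb Q$-factorial one simply takes $Y_i=X_i$, yielding (2); the program we run contracts the intrinsic $(K_X+\Delta)$-negative extremal rays and hence coincides with the established log canonical program when $(X,\Delta)$ is log canonical, yielding (1); and (3) follows similarly, the MR discrepancy bound being preserved under each $(K_X+\Delta)$-negative contraction as seen on the crepant model $Y_i$.
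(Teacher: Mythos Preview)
Your approach differs fundamentally from the paper's, and it contains a genuine gap.

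The paper does \emph{not} transport the MMP from a $\mathbb Q$-factorial model to $X$. Instead it proves the cone and contraction theorem (Theorem~\ref{a-thm5.5}) directly for arbitrary log surfaces via the basepoint-free theorem (Theorems~\ref{a-thm5.1} and~\ref{a-thm5.4}), with no GMRLC hypothesis needed for that step. The GMRLC assumption enters only afterwards, through Lemma~\ref{a-lem4.3}, which shows that the image of a $(K_X+\Delta)$-negative contraction is again GMRLC. Crucially, Lemma~\ref{a-lem4.3} does not keep the same $\mathbb Q$-factorial model: given $\varphi\colon X\to Z$, it runs a fresh $(K_Y+\Theta_+)$-MMP on $Y$ \emph{over $Z$} to manufacture a new crepant $\mathbb Q$-factorial model $Y^*\to Z$.

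Your inductive setup breaks precisely where you assert that $K_{Y'}+\Delta_{Y'}=f'^*(K_{X'}+\Delta')$. Suppose $g\colon Y\to Y'$ contracts the extremal curve $C$ and the $f$-exceptional curves are $E_1,\dots,E_r$. By the negativity lemma, $g^*(K_{Y'}+\Delta_{Y'})=K_Y+\Delta_Y+aC$ with $a>0$, while $g^*f'^*(K_{X'}+\Delta')=(\varphi\circ f)^*(K_{X'}+\Delta')=K_Y+\Delta_Y+b\,f^*f_*C$ for some $b>0$. If $f(C)$ passes through a singular point of $X$, then $f^*f_*C=C+\sum c_iE_i$ with some $c_i>0$, so $aC\ne b\,f^*f_*C$ and the crepant identity fails. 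Equivalently, $(K_{Y'}+\Delta_{Y'})\cdot g_*E_i=a\,(C\cdot E_i)$, which is strictly positive whenever $C$ meets $E_i$; hence the images $g_*E_i$ are not $(K_{Y'}+\Delta_{Y'})$-trivial and $(Y',\Delta_{Y'})$ cannot be a crepant model of $(X',\Delta')$. This is exactly why the paper, in Lemma~\ref{a-lem4.3}, discards $Y'$ and instead runs an MMP over $Z$ to reach a model on which $K+\Theta$ becomes numerically trivial over $Z$.

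A second, related gap is the construction of $\varphi\colon X\to X'$ itself. You acknowledge this is the hard point but do not resolve it: invoking the rigidity lemma presupposes that $\varphi$ already exists, and your alternative description of $X'$ as the simultaneous contraction of $\{E_1,\dots,E_r,C\}$ on $Y$ requires an argument that this set is contractible (its intersection matrix need not be negative definite a priori). The paper sidesteps this entirely by producing $\varphi$ from the basepoint-free theorem on $X$.
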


More precisely, we prove the cone and contraction theorem 
for log surfaces 
in full generality (see Theorem \ref{a-thm5.5}). 
Let $\varphi\colon X\to Z$ be a $(K_X+\Delta)$-negative 
extremal birational contraction. 
Then we can check that if $(X, \Delta)$ is generalized 
MR log canonical then so is $(Z,  \varphi_*\Delta)$ 
(see Lemma \ref{a-lem4.3}). 
Therefore, we can freely run the minimal model program for 
generalized MR log canonical surfaces. 
Hence we establish the minimal model theory for 
generalized MR log canonical surfaces. 
We note that Theorem \ref{a-thm1.5} contains 
the abundance theorem for generalized MR log canonical 
surfaces, which is highly nontrivial and is new. 

\begin{ack}
The author was partially supported by JSPS KAKENHI 
Grant Numbers JP16H03925, JP16H06337. 
He thanks Kento Fujita, 
Kenta Hashizume, Haidong Liu, and Hiromu Tanaka for useful comments. 
He also thanks Keisuke Miyamoto for pointing out some mistakes. 
Finally, he thanks the referee for some useful comments and 
suggestions. 
\end{ack}

We will treat algebraic surfaces defined over any algebraically 
closed field $k$ of arbitrary characteristic throughout this paper. 
We will freely use the standard notation and the results in 
\cite{fujino-fundamental},  
\cite{fujino-surfaces}, and \cite{tanaka} (see also \cite{fujino-tanaka} and 
\cite{fujino-foundations}). 

\section{Preliminaries}\label{a-sec2}

In this section, we collect some basic definitions. 

\begin{defn}[Operations for $\mathbb R$-divisors]\label{a-def2.1}
Let $D=\sum _i d_i D_i$ be an $\mathbb R$-divisor on a normal surface 
such that 
$D_i$ is a prime 
divisor and $d_i$ is a real number for every $i$ and 
that $D_i\ne D_j$ for $i\ne j$. 
Then $\lceil D\rceil$ (resp.~$\lfloor D\rfloor$) 
denotes the {\em{round-up}} (resp.~{\em{round-down}}) 
of $D$. 
We put $\{D\}:=D-\lfloor D\rfloor$ and call it 
the {\em{fractional part}} of $D$. 
\end{defn}

\begin{defn}[$\mathbb R$-linear equivalence]\label{a-def2.2}
Let $\Delta_1$ and $\Delta_2$ be $\mathbb R$-Cartier divisors 
on a normal surface. 
Then $\Delta_1\sim _{\mathbb R}\Delta_2$ menas 
that $\Delta_1$ is $\mathbb R$-linearly equivalent to 
$\Delta_2$. 
\end{defn}

We recall the definitions of {\em{klt surfaces}}, 
{\em{log canonical surfaces}}, and {\em{multiplier ideal sheaves}}. 

\begin{defn}[KLT surfaces, log canonical surfaces, and multiplier 
ideal sheaves]\label{a-def2.3}
Let $(X, \Delta)$ be a log surface as in Definition \ref{a-def1.1} and 
let $f\colon Y\to X$ be a proper 
birational morphism 
from a normal surface $Y$. 
Then we can write $$
K_Y+\Delta_Y=f^*(K_X+\Delta)
$$ with $f_*\Delta_Y=\Delta$.  
If the coefficients of $\Delta_Y$ are less than or equal to 
one (resp.~less than one) 
for every $f\colon Y\to X$, then $(X, \Delta)$ is 
called a {\em{log canonical surface}} 
(resp.~{\em{klt surface}}). 

We further assume that $Y$ is smooth and $\Supp \Delta_Y$ is 
a simple normal crossing divisor on $Y$. 
Then we put 
$$
\mathcal J(X, \Delta):=f_*\mathcal O_Y(-\lfloor \Delta_Y\rfloor)
=f_*\mathcal O_Y(\lceil K_Y-f^*(K_X+\Delta)\rceil)
$$
and call it the {\em{multiplier ideal sheaf}} of the pair $(X, \Delta)$. 
It is well known that $\mathcal J(X, \Delta)$ is 
independent of the resolution $f\colon Y\to X$. 
We note that $\mathcal J(X, \Delta)\subset \mathcal O_X$ 
holds, that is, $\mathcal J(X, \Delta)$ is an ideal sheaf on $X$. 
We can check that $(X, \Delta)$ is klt if and only if 
$\mathcal J(X, \Delta)=\mathcal O_X$ holds. 
\end{defn}

Let us recall a useful vanishing theorem 
for log surfaces. 

\begin{thm}[Kawamata--Viehweg--Nadel vanishing theorem 
for log surfaces]\label{a-thm2.4} 
Let $(X, \Delta)$ be a log surface and 
let $\pi\colon X\to S$ be a proper surjective morphism onto a variety 
$S$. 
When the characteristic of the base field $k$ is positive, we further 
assume that $\dim S\geq 1$. 
Let $L$ be a Cartier divisor on $X$ such that 
$L-(K_X+\Delta)$ is $\pi$-nef and $\pi$-big. 
Then $$R^i\pi_*(\mathcal O_X(L)\otimes \mathcal J(X, \Delta))=0$$ 
holds for 
every $i>0$, 
where $\mathcal J(X, \Delta)$ 
is the multiplier ideal sheaf of $(X, \Delta)$. 
\end{thm}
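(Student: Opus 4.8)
The plan is to reduce the statement to the Kawamata–Viehweg vanishing theorem on a smooth surface, by taking a suitable log resolution and combining a relative vanishing along the birational morphism with the Leray spectral sequence. First I would choose a proper birational morphism $f\colon Y\to X$ from a smooth surface $Y$ such that $\Supp\Delta_Y$ is a simple normal crossing divisor, where $K_Y+\Delta_Y=f^*(K_X+\Delta)$ with $f_*\Delta_Y=\Delta$; such an $f$ computes $\mathcal J(X,\Delta)=f_*\mathcal O_Y(-\lfloor\Delta_Y\rfloor)$ by Definition \ref{a-def2.3}. Set $M:=f^*L-\lfloor\Delta_Y\rfloor$. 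Since $\lceil K_Y-f^*(K_X+\Delta)\rceil=\lceil-\Delta_Y\rceil=-\lfloor\Delta_Y\rfloor$, we have $M=f^*L+\lceil K_Y-f^*(K_X+\Delta)\rceil$, and the projection formula (recall that $L$ is Cartier) gives $f_*\mathcal O_Y(M)=\mathcal O_X(L)\otimes\mathcal J(X,\Delta)$.

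Next I would compute $M-K_Y=f^*(L-(K_X+\Delta))+\{\Delta_Y\}$, equivalently $M=K_Y+\lceil D\rceil$ with $D:=f^*(L-(K_X+\Delta))$. Because $f$ is birational and $L-(K_X+\Delta)$ is $\pi$-nef and $\pi$-big, the divisor $D$ is $(\pi\circ f)$-nef and $(\pi\circ f)$-big; moreover $\Supp\{D\}\subseteq\Supp\Delta_Y$ is simple normal crossing since $f^*L$ and $K_Y$ are integral. The relative Kawamata–Viehweg vanishing theorem on the smooth surface $Y$ over $S$ (valid in arbitrary characteristic under the hypothesis $\dim S\ge 1$ in positive characteristic) then yields $R^i(\pi\circ f)_*\mathcal O_Y(M)=0$ for all $i>0$.

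I would then dispose of the higher direct images along $f$ by the local vanishing theorem: since $M=f^*L+\lceil K_Y-f^*(K_X+\Delta)\rceil$ with $L$ Cartier, the relative Kawamata–Viehweg vanishing for the birational morphism $f$ of surfaces gives $R^qf_*\mathcal O_Y(M)=0$ for $q>0$. Feeding both vanishings into the Leray spectral sequence $R^p\pi_*R^qf_*\mathcal O_Y(M)\Rightarrow R^{p+q}(\pi\circ f)_*\mathcal O_Y(M)$ collapses it to isomorphisms $R^i\pi_*\bigl(\mathcal O_X(L)\otimes\mathcal J(X,\Delta)\bigr)=R^i\pi_*f_*\mathcal O_Y(M)\cong R^i(\pi\circ f)_*\mathcal O_Y(M)=0$ for every $i>0$, as desired.

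The main obstacle is the validity of the two surface vanishing theorems in positive characteristic: ordinary Kawamata–Viehweg vanishing fails there, which is precisely why the hypothesis $\dim S\ge 1$ is imposed. The substance of the argument is thus to invoke the characteristic-free relative vanishing theorems for smooth and for birational surface morphisms established in \cite{tanaka} (see also \cite{fujino-tanaka}) in place of the classical ones; everything else is the standard bookkeeping of the Leray spectral sequence and the projection formula.
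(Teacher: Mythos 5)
Your argument is correct, and it is a genuine proof where the paper's own ``proof'' is essentially a pair of citations: in characteristic zero the paper reduces to $k=\mathbb C$ by the Lefschetz principle and quotes \cite[Theorem 3.4.2]{fujino-foundations}, and in positive characteristic it quotes \cite[Theorem 2.7]{tanaka2} (see also \cite[Section 3]{tanaka3}). What you do differently is unpack, uniformly in all characteristics, the standard reduction that sits inside those references: log resolution computing $\mathcal J(X,\Delta)$, the identity $f^*L-\lfloor\Delta_Y\rfloor=K_Y+\lceil f^*(L-(K_X+\Delta))\rceil$, the projection formula, relative Kawamata--Viehweg vanishing for $\pi\circ f$ and for $f$, and the Leray spectral sequence; your computations here are all accurate, and your presentation has the advantage of isolating exactly where the hypothesis $\dim S\geq 1$ is used in positive characteristic (namely in the vanishing for $\pi\circ f\colon Y\to S$). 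The one point to repair is the attribution of the two vanishing inputs in positive characteristic. The vanishing along the birational morphism $f$ is indeed covered by \cite{fujino-tanaka} (Theorem 6.2 there, which this paper itself invokes in the proof of Theorem \ref{a-thm5.6}), but the vanishing for a morphism from a smooth surface onto a positive-dimensional base --- the fibration case, which is the nontrivial one, since ordinary Kawamata--Viehweg vanishing fails in characteristic $p$ and even the relative statement over a curve requires real work --- is not established in \cite{tanaka}, whose fundamental theorems are proved by different techniques; the correct reference is \cite[Theorem 2.7]{tanaka2} (or \cite[Section 3]{tanaka3}). Note that this single result in fact covers both of your inputs simultaneously, since a proper birational morphism of surfaces also has image of dimension $\geq 1$. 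With the citations adjusted in this way, your proof is complete and is, in substance, the argument behind the paper's references rather than a logically different one.
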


\begin{proof}[Sketch of Proof]
If the characteristic of $k$ is zero, 
then we can assume that 
$k=\mathbb C$ by the Lefschetz principle. 
In this case, the statement is well known 
(see \cite[Theorem 3.4.2]{fujino-foundations}). 
If the characteristic of $k$ is positive, 
then the desired vanishing theorem follows from 
\cite[Theorem 2.7]{tanaka2} (see also \cite[Section 3]{tanaka3}). 
\end{proof}

We close this section with the following well-known result on projectivity. 

\begin{lem}\label{a-lem2.5}
Let $X$ be a normal $\mathbb Q$-factorial surface. 
Then $X$ is always quasi-projective. 
\end{lem}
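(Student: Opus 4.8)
The plan is to reduce the assertion to the classical fact that a smooth complete surface is projective, and to use $\mathbb Q$-factoriality to descend an ample class along a resolution. Since an open subvariety of a projective variety is quasi-projective, it suffices to exhibit a projective completion of $X$.

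First I would compactify. By Nagata's theorem $X$ is a dense open subvariety of a complete surface, and after normalizing we obtain a normal complete surface $\bar X$ containing $X$ as a dense open subset. I then want to arrange that $\bar X$ is $\mathbb Q$-factorial. Here I would use that $\mathbb Q$-factoriality of a normal surface is a Zariski-local property: it holds if and only if every local ring $\mathcal O_{\bar X, x}$ is $\mathbb Q$-factorial, since for a given Weil divisor a common multiple becomes locally principal at every point and is therefore Cartier. Resolving the finitely many singular points of $\bar X$ lying on the boundary $\bar X\setminus X$ makes those points smooth while leaving the local rings over $X$ untouched; as smooth points and the $\mathbb Q$-factorial points of $X$ are all $\mathbb Q$-factorial, the resulting normal complete surface $\bar X'$ is $\mathbb Q$-factorial and still contains $X$ as a dense open subset. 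Resolution of surface singularities is available in every characteristic by Abhyankar and Lipman, which is what makes this step legitimate over an arbitrary algebraically closed field.

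It then remains to prove that a normal $\mathbb Q$-factorial complete surface $\bar X'$ is projective. I would take a resolution $g\colon V\to \bar X'$ with $V$ smooth and complete; by the classical theorem (valid in all characteristics) that every smooth complete surface is projective, $V$ carries an ample divisor $H$. Set $D:=g_*H$, a Weil divisor on $\bar X'$, which is $\mathbb Q$-Cartier because $\bar X'$ is $\mathbb Q$-factorial; this is the one place where $\mathbb Q$-factoriality is essential. Writing $g^*D=H+\sum_i e_i E_i$ with $E_i$ the $g$-exceptional curves, the relations $(g^*D)\cdot E_j=0$ together with the negative-definiteness of the intersection matrix $(E_i\cdot E_j)$ force $e_i>0$ for all $i$. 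Using the projection formula I would then check $D^2=H^2+\sum_i e_i(E_i\cdot H)>0$ and, for every curve $C\subset \bar X'$ with strict transform $\widetilde C$, $D\cdot C=H\cdot \widetilde C+\sum_i e_i(E_i\cdot \widetilde C)>0$. By the Nakai--Moishezon criterion on the proper normal surface $\bar X'$, the $\mathbb Q$-Cartier divisor $D$ is ample, so $\bar X'$ is projective and hence $X$ is quasi-projective.

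The main obstacle I expect is the descent step: one must know that $g_*H$ remains positive after being pushed down through the contracted curves, and this is exactly where the sign of the coefficients $e_i$ (controlled by the negative-definiteness of the exceptional configuration) and the properness needed for Nakai--Moishezon come together. A secondary technical point is the reduction to a $\mathbb Q$-factorial completion, which has to be carried out without destroying the open subset $X$; the locality of $\mathbb Q$-factoriality for surfaces is what lets one repair only the boundary.
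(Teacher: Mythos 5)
Your proof is correct, and it is in substance the proof the paper relies on: the paper gives no argument of its own here, but simply asserts that the proof of \cite[Lemma 2.2]{fujino-surfaces} works over any algebraically closed field of arbitrary characteristic, and that proof follows exactly your route --- compactify, arrange a $\mathbb Q$-factorial completion by resolving only the boundary singularities, push an ample divisor down from a resolution (its $\mathbb Q$-Cartierness being the one use of $\mathbb Q$-factoriality), and conclude by Nakai--Moishezon on the complete surface. Your two delicate steps are both handled correctly: the Zariski-locality of $\mathbb Q$-factoriality for normal varieties (height-one primes of $\mathcal O_{\bar X,x}$ are germs of global prime divisors, and quasi-compactness gives a uniform multiple), and the strict positivity $e_i>0$, which does follow from negative definiteness of the exceptional intersection matrix together with $H\cdot E_j>0$. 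Your write-up also makes explicit precisely the ingredients whose characteristic-independence the paper's one-line proof implicitly invokes: Abhyankar--Lipman resolution of surface singularities, Zariski's projectivity of smooth complete surfaces, and the Nakai--Moishezon criterion in Kleiman's form for proper, not a priori projective, schemes --- this last point is essential, since projectivity of the completion is exactly what is being proved, so the projective version of the criterion would be circular.
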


\begin{proof} 
The proof of \cite[Lemma 2.2]{fujino-surfaces} works as well 
over 
any algebraically closed field $k$ of arbitrary characteristic. 
\end{proof}

\section{MR log canonical surfaces and generalized MR log canonical 
surfaces}\label{a-sec3}

In this section, we explain various definitions and some examples. 
The following lemmas are obvious by definition. 
We state them explicitly for the reader's convenience. 

\begin{lem}\label{a-lem3.1} 
Let $(X, \Delta)$ be a log canonical surface. 
Then $(X, \Delta)$ is MR log canonical. 
\end{lem}
\begin{proof}
Let $f\colon Y\to X$ be the minimal resolution of 
singularities of $X$. Then we can write $$K_Y+\Delta_Y=f^*(K_X+\Delta)$$ 
as usual. By the definition of log canonical 
surfaces (see 
Definition \ref{a-def2.3}), 
the coefficients of $\Delta_Y$ are less than or equal to one. 
This means that $(X, \Delta)$ is MR log canonical. 
\end{proof}

\begin{lem}\label{a-lem3.2}
Let $(X, \Delta)$ be a log surface such that 
$X$ is smooth. 
Then $(X, \Delta)$ is MR log canonical. 
\end{lem}

\begin{proof}
Since $X$ is smooth, $X$ itself is the minimal resolution of 
singularities of $X$. 
Therefore, $(X, \Delta)$ is MR log canonical because 
the coefficients of $\Delta$ are less than or equal to one 
by the definition of log surfaces (see Definition \ref{a-def1.1}). 
\end{proof}

We note that an MR log canonical surface is not necessarily 
log canonical. 

\begin{ex}\label{a-ex3.3}
We put $X=\mathbb P^2$ and 
$\Delta=L_1+L_2+L_3$, 
where 
$L_1$, $L_2$, and $L_3$ are three distinct 
lines passing through a point $P$. 
Then $(X, \Delta)$ is MR log canonical by Lemma \ref{a-lem3.2}. 
On the other hand, we can check that $(X, \Delta)$ is 
not log canonical by taking a blow-up of $X$ at $P$. 
\end{ex}

We summarize the relationship between various definitions. 

\begin{lem}\label{a-lem3.4}
We have the following properties. 
\begin{itemize}
\item[(i)] If $(X, \Delta)$ is MR log canonical, 
then it is automatically generalized MR log canonical. 
\item[(ii)] If $(X, \Delta)$ is a $\mathbb Q$-factorial 
log surface, 
then it is generalized MR log canonical. 
\item[(iii)] A log canonical surface is not necessarily $\mathbb Q$-factorial. 
\item[(iv)] A $\mathbb Q$-factorial log surface is not 
always 
log canonical. 
\item[(v)] A $\mathbb Q$-factorial 
log surface is not always MR log canonical. 
\end{itemize}
\end{lem}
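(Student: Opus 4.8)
The plan is to dispatch the two inclusion statements (i) and (ii) directly from the definitions, and to establish the three ``not necessarily'' statements (iii)--(v) by exhibiting explicit examples; the genuine content is concentrated in (v).

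For (i), suppose $(X,\Delta)$ is MR log canonical and let $f\colon Y\to X$ be the minimal resolution used in Definition \ref{a-def1.3}. Since $Y$ is smooth it is in particular $\mathbb Q$-factorial, and by hypothesis the divisor $\Delta_Y$ determined by $K_Y+\Delta_Y=f^*(K_X+\Delta)$ and $f_*\Delta_Y=\Delta$ is effective (as recorded after Definition \ref{a-def1.3}) with coefficients at most one. Thus this very $f$ witnesses the condition in Definition \ref{a-def1.4}, so $(X,\Delta)$ is generalized MR log canonical. For (ii), if $X$ is $\mathbb Q$-factorial I would simply take $f=\mathrm{id}_X\colon X\to X$; then $\Delta_Y=\Delta$ is effective with coefficients at most one by the definition of a log surface (Definition \ref{a-def1.1}), and $Y=X$ is $\mathbb Q$-factorial, so again Definition \ref{a-def1.4} is satisfied.

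For (iii) and (iv) I would produce examples. For (iii), take $X$ to be the cone over a smooth plane cubic curve (a Gorenstein singularity, so $K_X$ is Cartier) and $\Delta=0$; the minimal resolution has a single elliptic exceptional curve $E$ of discrepancy $-1$, so $(X,0)$ is log canonical, while $X$ fails to be $\mathbb Q$-factorial because the positive genus of $E$ forces the local class group to be infinite. For (iv), Example \ref{a-ex3.3} already suffices: there $X=\mathbb P^2$ is smooth, hence $\mathbb Q$-factorial, and $(X,L_1+L_2+L_3)$ is a log surface that is not log canonical.

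The crux is (v), where I must exhibit a $\mathbb Q$-factorial log surface that is not MR log canonical. Since a smooth surface is automatically MR log canonical (Lemma \ref{a-lem3.2}), such an $X$ must be singular, and the cleanest route is to take $\Delta=0$ and an $X$ carrying a $\mathbb Q$-factorial but non-log-canonical singularity: then on the minimal resolution $f\colon Y\to X$ one has $\Delta_Y=-\sum_j a_j E_j$ with some discrepancy $a_j<-1$, so a coefficient of $\Delta_Y$ exceeds one. Concretely I would take the singularity whose minimal resolution dual graph is the star consisting of a central rational curve $E_0$ with $E_0^2=-3$ meeting five disjoint $(-2)$-curves $E_1,\dots,E_5$; solving $K_Y\cdot E_i=\sum_j a_j(E_j\cdot E_i)$ yields $a_0=-2$ and $a_1=\cdots=a_5=-1$, so the coefficient of $E_0$ in $\Delta_Y$ is $2$ and the pair is not MR log canonical, while the dual graph is a tree of rational curves and hence $X$ is $\mathbb Q$-factorial. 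The main obstacle is precisely the tension built into this example: I must simultaneously guarantee $\mathbb Q$-factoriality (which for a surface singularity forces every exceptional curve to be rational and the dual graph to be a tree) and the failure of log canonicity (which needs a discrepancy below $-1$), and then justify that the configuration is genuinely realizable, i.e. that the negative-definite star (here $b_0=3>\tfrac52=\sum_i 1/b_i$) can be contracted to a normal algebraic surface over an arbitrary algebraically closed field. I would settle this last point by embedding the configuration in a smooth projective surface via a sequence of blow-ups and then contracting it, using the contractibility of negative-definite curve configurations, which holds in every characteristic.
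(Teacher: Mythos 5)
Your items (i)--(iv) are correct and essentially agree with the paper's proof: (i) and (ii) are the same definitional observations (minimal resolution is smooth hence $\mathbb Q$-factorial; identity map), and (iii) is the paper's own example, the cone over an elliptic curve. For (iv) you invoke Example \ref{a-ex3.3} ($\mathbb P^2$ with three concurrent lines), which is a valid and even simpler alternative to the paper's route (the paper instead uses a surface with a rational, non-log-canonical singularity and $\Delta=0$); note, however, that your example cannot be recycled for (v), since it \emph{is} MR log canonical, which you correctly recognize.

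The genuine gap is in (v), exactly at the point you yourself flag as the main obstacle. Your configuration (central $(-3)$-curve $E_0$ meeting five disjoint $(-2)$-curves) is negative definite and your discrepancies $a_0=-2$, $a_1=\cdots=a_5=-1$ are computed correctly, but the resulting singularity is \emph{not rational}: the fundamental cycle is $Z=2E_0+E_1+\cdots+E_5$, with $Z^2=-2$ and $K_Y\cdot Z=2$, so $p_a(Z)=1+\tfrac12(Z^2+K_Y\cdot Z)=1$, i.e.\ Artin's rationality criterion fails (the singularity is minimally elliptic). Consequently the principle you rely on, ``tree of rational curves, hence $\mathbb Q$-factorial,'' is not a theorem. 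The correct and standard statement --- the one the paper uses --- is that \emph{rational} singularities are $\mathbb Q$-factorial, where rationality means $p_a(Z)=0$, not that the exceptional curves are rational and form a tree. For a non-rational germ over $\mathbb C$ the local class group of the completion acquires a positive-dimensional additive part (here $p_g=1$), so $\mathbb Q$-factoriality of an algebraic model is a delicate, realization-dependent property that a ``blow up and contract'' construction does not control; worse, for non-rational negative-definite configurations even contractibility to a \emph{scheme} is not automatic in the algebraic category (Grauert's criterion is analytic, and Artin's contraction theorems cover the rational case, giving in general only an algebraic space). Your parenthetical converse also fails in positive characteristic: over $\overline{\mathbb F}_p$ every normal surface is $\mathbb Q$-factorial, including cones over elliptic curves. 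The example can be repaired, but by a genuinely different argument: either change $E_0^2$ to $-4$, which gives $p_a(Z)=0$ (rational, hence $\mathbb Q$-factorial and contractible by Artin) with $a_0=-\tfrac43<-1$; or realize your star as a Demazure--Pinkham quasi-homogeneous cone over $\mathbb P^1$, whose local class group is finite. The paper avoids the whole issue by keeping the singularity as mild as possible and putting the failure into the boundary: $X$ a projective toric surface with one $A_1$-point $P$ (rational, so $\mathbb Q$-factoriality and realizability are immediate) and $\Delta=D_1+D_2$ two general Cartier divisors through $P$, so that the $(-2)$-curve appears in $\Delta_Y$ with coefficient $2$.
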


\begin{proof} By definition, (i) is obvious because 
smooth surface is automatically $\mathbb Q$-factorial. 
Let $(X, \Delta)$ be a $\mathbb Q$-factorial 
log surface. Then the identity map of $X$ satisfies the condition of 
Definition \ref{a-def1.4}. Therefore, 
$(X, \Delta)$ is generalized MR log canonical. 
This is (ii). It is well known that a log canonical surface is 
not always $\mathbb Q$-factorial by the classification of 
log canonical surface singularities. The simplest example is 
the cone over an elliptic curve. Hence (iii) holds. 
We note that surfaces with only rational singularities are 
automatically $\mathbb Q$-factorial. On the other hand, 
two-dimensional rational singularities are not necessarily 
log canonical. Therefore, we obtain (iv). 
We can easily construct a projective toric 
surface $X$ with only one singular point $P$ of 
type $A_1$. It is well known 
that $X$ is $\mathbb Q$-factorial. 
Let $D_1$ and $D_2$ be two general Cartier divisors 
on $X$ passing through $P$. 
Then $(X, \Delta)$, where $\Delta=D_1+D_2$, 
is a $\mathbb Q$-factorial log surface. 
However, $(X, \Delta)$ is not MR log canonical at $P$. Hence 
we have (v). Of course, (iv) is a special case of (v). 
\end{proof}

By (ii) and (v) in 
Lemma \ref{a-lem3.4}, a generalized MR log canonical surface 
is not always MR log canonical. 
Moreover, there exists a generalized MR log canonical surface 
which is not $\mathbb Q$-factorial by Lemma \ref{a-lem3.1} and Lemma 
\ref{a-lem3.4} (iii). 

\section{Basic properties of generalized MR log canonical 
surfaces}\label{a-sec4} 

Let us start with 
the abundance theorem for GMRLC surfaces. 
It easily follows from 
\cite[Theorem 8.1]{fujino-surfaces} and 
\cite[Theorem 6.7]{tanaka}. 

\begin{thm}[Abundance theorem for GMRLC surfaces]\label{a-thm4.1}
Let $(X, \Delta)$ be a GMRLC surface and 
let $\pi\colon X\to S$ be a proper surjective morphism onto 
a variety $S$.  
Assume that $K_X+\Delta$ is $\pi$-nef. 
Then $K_X+\Delta$ is $\pi$-semi-ample. 
\end{thm}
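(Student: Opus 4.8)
The plan is to pull everything back to the $\mathbb Q$-factorial model furnished by the GMRLC hypothesis, apply the known abundance theorem for $\mathbb Q$-factorial log surfaces there, and then descend $\pi$-semi-ampleness back down to $X$.

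First I would use Definition \ref{a-def1.4} to fix a proper birational morphism $f\colon Y\to X$ from a $\mathbb Q$-factorial normal surface $Y$ with $K_Y+\Delta_Y=f^*(K_X+\Delta)$ and $f_*\Delta_Y=\Delta$, where $\Delta_Y$ is effective with coefficients $\leq 1$. Thus $(Y,\Delta_Y)$ is a $\mathbb Q$-factorial log surface; in particular $K_Y+\Delta_Y$ is $\mathbb R$-Cartier and, by Lemma \ref{a-lem2.5}, $Y$ is quasi-projective. Set $g:=\pi\circ f\colon Y\to S$, which is again proper surjective. Next I would check that $K_Y+\Delta_Y=f^*(K_X+\Delta)$ is $g$-nef: for any curve $C\subset Y$ contracted by $g$, either $f(C)$ is a point, in which case $f^*(K_X+\Delta)\cdot C=0$, or $f(C)$ is a curve contracted by $\pi$, in which case $f^*(K_X+\Delta)\cdot C=(K_X+\Delta)\cdot f_*C\geq 0$ by the projection formula and the $\pi$-nefness of $K_X+\Delta$. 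Applying the abundance theorem for $\mathbb Q$-factorial log surfaces, that is \cite[Theorem 8.1]{fujino-surfaces} together with \cite[Theorem 6.7]{tanaka}, to $(Y,\Delta_Y)$ over $S$, I conclude that $f^*(K_X+\Delta)=K_Y+\Delta_Y$ is $g$-semi-ample.

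The remaining, and essentially the only non-formal, step is to descend semi-ampleness along $f$. Since $f$ is birational between normal surfaces we have $f_*\mathcal O_Y=\mathcal O_X$, so $f$ has connected fibers. Write the $g$-semi-ample structure as a morphism $h\colon Y\to V$ over $S$, with $V$ projective over $S$, and a $g_V$-ample $\mathbb R$-divisor $A$ on $V$ such that $f^*(K_X+\Delta)\sim_{\mathbb R}h^*A$. For any $f$-exceptional curve $C$ one has $f^*(K_X+\Delta)\cdot C=0$, hence $A\cdot h_*C=h^*A\cdot C=0$; as $A$ is relatively ample, this forces $h$ to contract $C$. Thus $h$ contracts every fiber of $f$, so by the rigidity lemma $h$ factors as $h=h'\circ f$ for a morphism $h'\colon X\to V$ over $S$. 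Then $f^*(K_X+\Delta)\sim_{\mathbb R}f^*(h')^*A$, and applying $f_*$ (which returns the class of $K_X+\Delta$ since $f_*f^*$ preserves $\mathbb R$-linear equivalence classes of $\mathbb R$-Cartier divisors) yields $K_X+\Delta\sim_{\mathbb R}(h')^*A$ with $A$ ample over $S$. Therefore $K_X+\Delta$ is $\pi$-semi-ample.

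I expect the descent in the final paragraph to be the main point to get exactly right: although it is standard, it must be carried out for an $\mathbb R$-Cartier (not merely $\mathbb Q$-Cartier) divisor and in the relative setting over a merely proper $\pi$, so I would phrase it through the contraction morphism $h$ and the rigidity lemma rather than through global generation of a line bundle. The reduction and the nef-pullback verification are routine; everything else is an immediate consequence of the already-established abundance theorem for $\mathbb Q$-factorial log surfaces.
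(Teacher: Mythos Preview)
Your proposal is correct and follows essentially the same approach as the paper's proof: pull back to the $\mathbb Q$-factorial model $(Y,\Delta_Y)$ from Definition \ref{a-def1.4}, apply the abundance theorem for $\mathbb Q$-factorial log surfaces (\cite[Theorem 8.1]{fujino-surfaces}, \cite[Theorem 6.7]{tanaka}), and descend. The paper's proof simply asserts the last two steps in one line each, whereas you spell out the $g$-nefness check and the descent of semi-ampleness via the rigidity lemma; these elaborations are fine but not needed, since the descent of relative semi-ampleness along a proper birational morphism with $f_*\mathcal O_Y=\mathcal O_X$ is standard.
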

\begin{proof}
By definition, there exist a proper 
birational morphism $f\colon Y\to X$ such that 
$K_Y+\Delta_Y=f^*(K_X+\Delta)$ and that $(Y, \Delta_Y)$ is a 
$\mathbb Q$-factorial log surface.  
It is obvious that 
$K_Y+\Delta_Y$ is $\pi\circ f$-nef. 
Therefore, $K_Y+\Delta_Y$ is $\pi\circ f$-semi-ample by 
the abundance theorem for $\mathbb Q$-factorial 
log surfaces (see \cite[Theorem 8.1]{fujino-surfaces} 
and \cite[Theorem 6.7]{tanaka}). 
Thus, we obtain that $K_X+\Delta$ is $\pi$-semi-ample. 
\end{proof}

As an easy consequence of Theorem \ref{a-thm4.1}, 
we have a weak version of the basepoint-free theorem 
for GMRLC surfaces. This weak version seems to be 
almost sufficient for 
many applications. 
We will treat more general results in Theorems \ref{a-thm5.1} and 
\ref{a-thm5.4}. 

\begin{cor}\label{a-cor4.2} 
Let $(X, \Delta)$ be a GMRLC surface and 
let $\pi\colon X\to S$ be a projective 
morphism onto a variety $S$. 
Let $D$ be a $\pi$-nef Cartier divisor 
on $X$ such that $D-(K_X+\Delta)$ is $\pi$-ample. 
Then $D$ is $\pi$-semi-ample. 
\end{cor}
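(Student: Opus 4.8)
The plan is to absorb the $\pi$-ample part $A:=D-(K_X+\Delta)$ into the boundary and then invoke the abundance theorem (Theorem \ref{a-thm4.1}). First I would record that $A$ is a $\pi$-ample $\mathbb R$-Cartier divisor and that $D\sim_{\mathbb R,S}(K_X+\Delta)+A$ is $\pi$-nef. The aim is to produce an effective $\mathbb R$-divisor $B$ with $B\sim_{\mathbb R,S}A$ such that $(X,\Delta+B)$ is again a GMRLC surface. Granting this, $K_X+\Delta+B\sim_{\mathbb R,S}D$ is $\pi$-nef, so Theorem \ref{a-thm4.1} shows that $K_X+\Delta+B$ is $\pi$-semi-ample; since $D\sim_{\mathbb R,S}K_X+\Delta+B$ and $\pi$-semi-ampleness is invariant under $\sim_{\mathbb R,S}$, it follows that $D$ is $\pi$-semi-ample.

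To construct $B$ I would use that $\pi$ is projective, so that very ample divisors over $S$ exist, together with the $\pi$-ampleness of $A$. Fix a proper birational morphism $f\colon Y\to X$ from a $\mathbb Q$-factorial surface as in Definition \ref{a-def1.4}, with $K_Y+\Delta_Y=f^*(K_X+\Delta)$ and $\Delta_Y$ effective of coefficients at most one, and recall that $f$ is an isomorphism outside a finite set of points of $X$. I would then write $A\sim_{\mathbb R,S}\sum_i a_iH_i$ with $a_i>0$ arbitrarily small and $H_i$ general members of very ample linear systems over $S$. By generality the $H_i$ can be taken to avoid the finitely many singular points of $X$ and the finite set over which $f$ is not an isomorphism, and to share no component with $\Supp\Delta$. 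Setting $B:=\sum_i a_iH_i$, the $\mathbb R$-divisor $\Delta+B$ is effective with coefficients at most one, and on $Y$ we have $f^*B=\sum_i a_i f^{-1}_*H_i$ with no exceptional contribution, whence $K_Y+\Delta_Y+f^*B=f^*(K_X+\Delta+B)$ and the coefficients of $\Delta_Y+f^*B$ remain at most one. Thus $(X,\Delta+B)$ is GMRLC via the same model $f$.

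Combining the two paragraphs yields the corollary. I expect the main technical point to be the choice of $B$: one must arrange $B$ to be general enough, with sufficiently small coefficients, that its pullback to the $\mathbb Q$-factorial model $Y$ creates no exceptional divisors and no coefficient exceeding one, while preserving $B\sim_{\mathbb R,S}A$. This is exactly where the $\pi$-ampleness of $A$ (which lets us move it to such a general small-coefficient representative) and the projectivity of $\pi$ enter. Once this is in place, the passage from the $\pi$-semi-ampleness of $K_X+\Delta+B$ to that of $D$ is immediate from the invariance of $\pi$-semi-ampleness under $\mathbb R$-linear equivalence over $S$, so no further argument beyond Theorem \ref{a-thm4.1} is needed.
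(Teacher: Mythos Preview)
Your proposal is correct and follows essentially the same approach as the paper: absorb the $\pi$-ample part into the boundary via a Bertini-type choice so that the pair remains GMRLC, then apply the abundance theorem (Theorem~\ref{a-thm4.1}). The paper compresses your explicit construction of $B$ on the model $Y$ into a one-line appeal to Bertini after first reducing to $S$ affine (a reduction you should also make so that general members of $\pi$-very-ample linear systems are available), but the argument is otherwise identical.
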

\begin{proof}
Without loss of generality, we may assume that $S$ is 
affine and that $\pi_*\mathcal O_X\simeq \mathcal O_S$ holds. 
By Bertini's theorem, we can write 
$D\sim _{\mathbb R} K_X+\Delta+A+\pi^*B$ for some 
ample $\mathbb R$-divisor $A$ on $X$ and 
an $\mathbb R$-Cartier divisor $B$ on $S$ such that 
$(X, \Delta+A)$ is GMRLC. 
By Theorem \ref{a-thm4.1}, $K_X+\Delta+A$ is $\pi$-semi-ample since 
it is $\pi$-nef. 
Hence $D$ is $\pi$-semi-ample. 
\end{proof}

The following lemma is a key result for GMRLC surfaces. 

\begin{lem}\label{a-lem4.3}
Let $(X, \Delta)$ be a GMRLC {\em{(}}resp.~an MRLC{\em{)}} 
surface and let $\varphi\colon X\to Z$ be a proper birational morphism 
onto a normal surface $Z$ such that 
$-(K_X+\Delta)$ is $\varphi$-nef. 
Then $(Z, \Delta_Z)$ is GMRLC {\em{(}}resp.~MRLC{\em{)}}, 
where $\Delta_Z=\varphi_*\Delta$. 
\end{lem}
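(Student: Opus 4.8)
The plan is to exploit the $\mathbb{Q}$-factorial model supplied by the hypothesis, run a relative minimal model program over $Z$ to settle the GMRLC case, and then treat the MRLC case by a separate comparison of minimal resolutions. First I would dispose of the easy part: since $\Delta_Z=\varphi_*\Delta$ is the pushforward of an effective $\mathbb{R}$-divisor with coefficients $\le 1$, it is again effective with coefficients $\le 1$, so the only substantive issues are that $K_Z+\Delta_Z$ is $\mathbb{R}$-Cartier (so that $(Z,\Delta_Z)$ is a log surface at all) and that a suitable $\mathbb{Q}$-factorial model witnesses the GMRLC property. By Definition \ref{a-def1.4} there is a proper birational $f\colon Y\to X$ with $Y$ normal $\mathbb{Q}$-factorial, $\Delta_Y$ effective with coefficients $\le 1$, and $K_Y+\Delta_Y=f^*(K_X+\Delta)$. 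Set $g:=\varphi\circ f\colon Y\to Z$; this is a projective birational morphism ($Y$ is quasi-projective by Lemma \ref{a-lem2.5}, and proper birational morphisms of normal surfaces are projective). For a curve $C$ contracted by $g$, either $C$ is $f$-exceptional and $(K_Y+\Delta_Y)\cdot C=f^*(K_X+\Delta)\cdot C=0$, or $f_*C$ is a $\varphi$-exceptional curve and $(K_Y+\Delta_Y)\cdot C=(K_X+\Delta)\cdot f_*C\le 0$ because $-(K_X+\Delta)$ is $\varphi$-nef. Hence $-(K_Y+\Delta_Y)$ is $g$-nef.

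Next I would run the $(K_Y+\Delta_Y)$-minimal model program over $Z$, which is legitimate because $(Y,\Delta_Y)$ is a $\mathbb{Q}$-factorial log surface and the minimal model theory for such surfaces is available (\cite{fujino-surfaces}, \cite{tanaka}). Since $g$ is birational, no Mori fiber space can occur, so the program terminates with a relative minimal model $h\colon Y^\dagger\to Z$ for which $K_{Y^\dagger}+\Delta_{Y^\dagger}$ is $h$-nef, $Y^\dagger$ is $\mathbb{Q}$-factorial, and $\Delta_{Y^\dagger}$ is effective with coefficients $\le 1$. The key point is that $g$-anti-nefness is preserved at every step: if $\psi\colon Y_i\to Y_{i+1}$ contracts a $(K_{Y_i}+\Delta_{Y_i})$-negative extremal ray with contracted curve $F$, then writing $K_{Y_i}+\Delta_{Y_i}=\psi^*(K_{Y_{i+1}}+\Delta_{Y_{i+1}})+cF$ with $c>0$, one finds for any vertical curve $C'$ on $Y_{i+1}$ with strict transform $C\ne F$ that $(K_{Y_{i+1}}+\Delta_{Y_{i+1}})\cdot C'=(K_{Y_i}+\Delta_{Y_i})\cdot C-c(F\cdot C)\le 0$, so $-(K_{Y_{i+1}}+\Delta_{Y_{i+1}})$ remains relatively nef. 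Thus at the end $K_{Y^\dagger}+\Delta_{Y^\dagger}$ is both $h$-nef and $h$-anti-nef, hence $h$-numerically trivial; by the relative abundance theorem for $\mathbb{Q}$-factorial log surfaces it is $h$-semi-ample, and being $h$-numerically trivial it descends to an $\mathbb{R}$-Cartier divisor on $Z$, yielding $K_{Y^\dagger}+\Delta_{Y^\dagger}=h^*(K_Z+\Delta_Z)$ with $K_Z+\Delta_Z$ $\mathbb{R}$-Cartier. Since $h_*\Delta_{Y^\dagger}=\Delta_Z$ and $\Delta_{Y^\dagger}$ is effective with coefficients $\le 1$ on the $\mathbb{Q}$-factorial surface $Y^\dagger$, the pair $(Z,\Delta_Z)$ is GMRLC. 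I expect this descent—extracting $\mathbb{R}$-Cartierness of $K_Z+\Delta_Z$ from the relatively trivial class on $Y^\dagger$—to be the main obstacle, precisely because the original model $Y$ need not work: the log pullback of $(Z,\Delta_Z)$ to $Y$ can fail to be effective along strict transforms of $\varphi$-exceptional curves of positive discrepancy, and it is exactly these that the relative MMP contracts.

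Finally, for the MRLC case I would argue on minimal resolutions. As an MRLC surface is GMRLC (Lemma \ref{a-lem3.4}(i)), the previous paragraph already gives that $K_Z+\Delta_Z$ is $\mathbb{R}$-Cartier, and the negativity lemma applied to the $\varphi$-nef divisor $-(K_X+\Delta)$ gives that $E:=(K_X+\Delta)-\varphi^*(K_Z+\Delta_Z)$ is an effective $\varphi$-exceptional divisor. Let $f\colon X'\to X$ and $g'\colon Z'\to Z$ be the minimal resolutions; by minimality of $Z'$ the resolution $\varphi\circ f\colon X'\to Z$ factors as $g'\circ\mu$ for a birational morphism $\mu\colon X'\to Z'$ of smooth surfaces, so $K_{X'}=\mu^*K_{Z'}+\sum_j m_jG_j$ with all $m_j>0$. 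Combining this with $\mu^*(K_{Z'}+\Delta_{Z'})=f^*\varphi^*(K_Z+\Delta_Z)=(K_{X'}+\Delta_{X'})-f^*E$ gives
$$\mu^*\Delta_{Z'}=\Delta_{X'}-f^*E+\sum_j m_jG_j.$$
For any prime divisor $D'$ on $Z'$ with strict transform $\widetilde{D'}$ on $X'$, the divisor $\widetilde{D'}$ is not $\mu$-exceptional, so comparing coefficients along $\widetilde{D'}$ yields that the coefficient of $D'$ in $\Delta_{Z'}$ equals $\mathrm{coeff}_{\widetilde{D'}}\Delta_{X'}-\mathrm{coeff}_{\widetilde{D'}}f^*E\le 1-0=1$, using that $\mathrm{coeff}_{\widetilde{D'}}\Delta_{X'}\le 1$ by the MRLC hypothesis on $X$ and that $f^*E\ge 0$. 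Hence every coefficient of $\Delta_{Z'}$ is $\le 1$, and since effectivity of $\Delta_{Z'}$ is automatic for a minimal resolution, $(Z,\Delta_Z)$ is MRLC.
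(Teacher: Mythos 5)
Your proof is correct, but it takes a genuinely different route from the paper's in both halves. For the GMRLC case, the paper does not run the MMP with respect to $K_Y+\Delta_Y$: it first constructs, via the negativity lemma, the ``numerical log pullback'' $\Theta\le\Delta_Y$ (characterized by $-(K_Y+\Theta)$ being numerically trivial over $Z$ and $(\varphi\circ f)_*\Theta=\Delta_Z$), splits it as $\Theta=\Theta_+-\Theta_-$ with $\Theta_\pm\ge 0$ sharing no components, and runs the MMP over $Z$ with respect to $K_Y+\Theta_+$; since $K_Y+\Theta_+\equiv\Theta_-$ over $Z$ with $\Theta_-$ effective and exceptional, that MMP contracts exactly $\Supp\Theta_-$ and terminates at a model where $K_{Y^*}+\Theta_+^*$ is numerically trivial over $Z$. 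You instead direct the MMP by $K_Y+\Delta_Y$ itself and supply the extra lemma (correctly proved, via $K_{Y_i}+\Delta_{Y_i}=\psi^*(K_{Y_{i+1}}+\Delta_{Y_{i+1}})+cF$ with $c>0$ and $F\cdot C\ge 0$) that relative anti-nefness is preserved at each step, so that the relative minimal model is automatically relatively numerically trivial; from there the abundance-plus-descent conclusion is identical to the paper's, and your closing remark about why the original model $Y$ cannot serve as a witness is exactly the phenomenon the paper encodes in $\Theta_-$. In the MRLC case the divergence is sharper: the paper keeps using the same MMP, observing that each step from the minimal resolution $Y$ is a $(-1)$-curve contraction (so $Y^*$ stays smooth) and then contracting any remaining $(-1)$-curves in fibers of $p\colon Y^*\to Z$ until $p$ becomes the minimal resolution of $Z$, with the crepant boundary keeping coefficients $\le 1$; you bypass the MMP entirely by factoring the minimal resolution of $X$ through that of $Z$ and comparing coefficients in $\mu^*\Delta_{Z'}=\Delta_{X'}-f^*E+\sum_j m_jG_j$, using effectivity of $E$ (negativity lemma) and the fact that strict transforms are never $\mu$-exceptional. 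Your MRLC argument is more elementary and self-contained, needing no tracking of smoothness through the MMP; the paper's version has the merit of producing the minimal resolution of $Z$ as the endpoint of the very same MMP used in the GMRLC case, so both statements fall out of one unified construction.
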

\begin{proof}
First, we assume that $(X, \Delta)$ is GMRLC. 
We take a proper birational morphism 
$f\colon Y\to X$ 
from a $\mathbb Q$-factorial normal surface $Y$ 
with $K_Y+\Delta_Y=f^*(K_X+\Delta)$ 
as in Definition \ref{a-def1.4}. 
Since $-(K_X+\Delta)$ is $\varphi$-nef, 
$-(K_Y+\Delta_Y)$ is nef over $Z$. 
Then, by the negativity lemma, 
we can uniquely take an $\mathbb R$-divisor 
$\Theta$ on $Y$ with $\Theta\leq \Delta_Y$ such that 
$-(K_Y+\Theta)$ is numerically trivial over $Z$ and 
$(\varphi\circ f)_*\Theta=\Delta_Z$. 
We write $\Theta=\Theta_+-\Theta_-$, where 
$\Theta_+$ and $\Theta_-$ are both effective $\mathbb R$-divisors 
with no common components. 
By construction, 
$K_Y+\Theta_+$ is numerically equivalent to 
$\Theta_-$ over $Z$ and $\Theta_-$ is exceptional over $Z$. 
We note that $Y$ is projective over $Z$ since it is $\mathbb Q$-factorial 
(see Lemma \ref{a-lem2.5}). 
Thus we can run the minimal model program 
over $Z$ with respect to $K_Y+\Theta_+$ 
(see \cite[Theorem 3.3]{fujino-surfaces} and \cite[Theorem 6.5]{tanaka}). 
Then we finally get a $\mathbb Q$-factorial log surface $(Y^*, 
\Theta_+^*)$ such that 
$K_{Y^*}+\Theta_+^*$ is numerically trivial over $Z$. 
By the abundance theorem (see \cite[Theorem 8.1]{fujino-surfaces} 
and \cite[Theorem 6.7]{tanaka}), 
$K_{Y^*}+\Theta_+^*$ is $\mathbb R$-linearly equivalent to zero 
over $Z$. 
By construction, $p_*(K_{Y^*}+\Theta_+^*)=K_Z+\Delta_Z$ holds, 
where $p\colon Y^*\to Z$, 
and $K_Z+\Delta_Z$ is $\mathbb R$-Cartier. 
Thus, we have $K_{Y^*}+\Theta_+^*=p^*(K_Z+\Delta_Z)$. 
This means that $(Z, \Delta_Z)$ is GMRLC because 
$(Y^*, \Theta_+^*)$ is a $\mathbb Q$-factorial 
log surface. 

Next, we assume that $(X, \Delta)$ is MRLC. 
Then we may assume that $f\colon Y\to X$ is the minimal resolution 
of singularities of $X$ in the above argument. 
Then we can easily see that $Y^*$ is a smooth 
surface. 
More precisely, $Y\to Y^*$ is a composition of 
contractions of $(-1)$-curves. 
If there exists a $(-1)$-curve in the fiber of $p\colon Y^*\to Z$, 
then we contract it to a smooth point. 
By repeating this process finitely many times, 
we may further assume that 
$p\colon Y^*\to Z$ is the minimal resolution of singularities 
of $Z$. 
Therefore, $(Z, \Delta_Z)$ is MRLC. 
\end{proof}

As an easy application of Lemma \ref{a-lem4.3}, 
we prove the following useful lemma although 
we do not use it explicitly in this paper. 

\begin{lem}\label{a-lem4.4} 
Let $(X, \Delta)$ be a GMRLC {\em{(}}resp.~an 
MRLC{\em{)}} surface and 
let $\Delta'$ be an effective $\mathbb R$-divisor on $X$ 
such that $\Delta'\leq \Delta$. 
Then $(X, \Delta')$ is GMRLC {\em{(}}resp.~MRLC{\em{)}}. 
\end{lem}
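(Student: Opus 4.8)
The plan is to reduce everything to Lemma \ref{a-lem4.3} by producing, on a single fixed birational model, a log surface whose anti-log-canonical divisor is nef over $X$ and whose pushforward is $\Delta'$.

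First I would treat the GMRLC case. Since $(X,\Delta)$ is GMRLC, fix a proper birational morphism $f\colon Y\to X$ from a $\mathbb Q$-factorial normal surface $Y$ with $K_Y+\Delta_Y=f^*(K_X+\Delta)$, $f_*\Delta_Y=\Delta$, and $\Delta_Y$ effective with coefficients at most one, as in Definition \ref{a-def1.4}. Set $E:=\Delta-\Delta'\ge 0$ and define
$$\Delta_Y':=\Delta_Y-f^{-1}_{*}E,$$
where $f^{-1}_{*}E$ denotes the strict transform, which is effective since $E\ge 0$ and is $\mathbb R$-Cartier because $Y$ is $\mathbb Q$-factorial. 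I would then check the three bookkeeping facts: $\Delta_Y'$ is effective, its coefficients are at most one, and $f_*\Delta_Y'=\Delta'$. These all follow by comparing coefficients component by component -- on a prime divisor that is not $f$-exceptional the coefficient of $\Delta_Y'$ equals that of $\Delta'$, and on an $f$-exceptional divisor it equals that of $\Delta_Y$ -- using that the non-exceptional part of $\Delta_Y$ is $f^{-1}_{*}\Delta$. Consequently $(Y,\Delta_Y')$ is a $\mathbb Q$-factorial log surface, hence GMRLC by Lemma \ref{a-lem3.4} (ii).

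The key computation is that $-(K_Y+\Delta_Y')$ is $f$-nef. Since $K_Y+\Delta_Y=f^*(K_X+\Delta)$ is numerically $f$-trivial, for every $f$-exceptional curve $C$ we get $(K_Y+\Delta_Y')\cdot C=-(f^{-1}_{*}E)\cdot C\le 0$, because $f^{-1}_{*}E$ is effective and carries no $f$-exceptional component, so it meets $C$ nonnegatively. Thus $-(K_Y+\Delta_Y')$ is nef over $X$, and applying Lemma \ref{a-lem4.3} to the GMRLC surface $(Y,\Delta_Y')$ and the morphism $f\colon Y\to X$ shows that $(X,f_*\Delta_Y')=(X,\Delta')$ is GMRLC. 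I emphasize that this step is exactly what guarantees that $K_X+\Delta'$ is $\mathbb R$-Cartier, i.e.\ that $(X,\Delta')$ is genuinely a log surface; this is the only delicate point, since $E$ itself need not be $\mathbb R$-Cartier, and it is resolved for free by invoking Lemma \ref{a-lem4.3} rather than trying to descend $K_Y+\Delta_Y'$ by hand.

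For the MRLC case I would run the identical argument, but now take $f\colon Y\to X$ to be the minimal resolution, so that $Y$ is smooth and $\Delta_Y$ is the divisor of Definition \ref{a-def1.3}. The same $\Delta_Y'=\Delta_Y-f^{-1}_{*}E$ then makes $(Y,\Delta_Y')$ a log surface on a smooth surface, hence MRLC by Lemma \ref{a-lem3.2}, and $-(K_Y+\Delta_Y')$ is $f$-nef by the same intersection computation. Applying the MRLC version of Lemma \ref{a-lem4.3} yields that $(X,\Delta')$ is MRLC, completing both cases. The main thing to be careful about is the coefficient bookkeeping for $\Delta_Y'$ together with the observation that the strict transform of the effective divisor $E$ meets exceptional curves nonnegatively; everything else is a direct appeal to the already-proved Lemma \ref{a-lem4.3}.
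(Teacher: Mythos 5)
Your proposal is correct and follows the paper's own proof essentially verbatim: the same divisor $\Delta_Y-f^{-1}_{*}(\Delta-\Delta')$ on the model $Y$ from Definition \ref{a-def1.4} (resp.\ the minimal resolution), the same observation that its anti-log-canonical divisor is $f$-nef because the strict transform of $\Delta-\Delta'$ has no $f$-exceptional components, and the same appeal to Lemma \ref{a-lem4.3} to conclude. The only difference is that you spell out the coefficient bookkeeping and the nefness computation that the paper leaves implicit, which is fine.
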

\begin{proof}
We take a proper birational morphism $f\colon Y\to X$ from a 
$\mathbb Q$-factorial normal surface $Y$ with 
$K_Y+\Delta_Y=f^*(K_X+\Delta)$ as in Definition \ref{a-def1.4}. 
We note that $f^{-1}_*(\Delta-\Delta')$ is $f$-nef. 
Therefore, we obtain that 
$-\left(K_Y+\Delta_Y-f^{-1}_*(\Delta-\Delta')\right)$ is $f$-nef. 
We also note that 
$f_*\left(\Delta_Y-f^{-1}_*(\Delta-\Delta')\right)=\Delta'$ by construction. 
Of course, $\left(Y, \Delta_Y-f^{-1}_*(\Delta-\Delta')\right)$ is 
GMRLC since $Y$ is $\mathbb Q$-factorial. 
Therefore, by Lemma \ref{a-lem4.3}, 
$(X, \Delta')$ is GMRLC. 
When $(X, \Delta)$ is MRLC, we can 
assume that $Y$ is smooth in the above argument. 
In this case, $\left(Y, \Delta_Y-f^{-1}_*(\Delta-\Delta')\right)$ is MRLC. 
Therefore, $(X, \Delta)$ is MRLC by Lemma \ref{a-lem4.3}. 
\end{proof}

\section{Minimal model theory for generalized MR log canonical surfaces}
\label{a-sec5}

In this section, we discuss the minimal model theory for 
GMRLC surfaces. 
Let us start with the basepoint-free theorem 
for log surfaces (see also \cite[Theorem 4.2]{tanaka3}). 

\begin{thm}[Basepoint-freeness for log surfaces I]\label{a-thm5.1} 
Let $(X, \Delta)$ be a log surface such that 
$\lfloor \Delta\rfloor=0$ and let $\pi\colon X\to S$ be a projective 
surjective morphism onto a variety $S$. 
Let $D$ be a $\pi$-nef Cartier divisor 
on $X$ such that $aD-(K_X+\Delta)$ is 
$\pi$-nef and $\pi$-big for some 
positive integer $a$. 
Then there exists a positive integer $m_0$ 
such that 
$\mathcal O_X(mD)$ is $\pi$-generated for 
every integer $m\geq m_0$. 
\end{thm}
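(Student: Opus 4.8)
The plan is to run the Kawamata--Shokurov basepoint-free argument, using the Kawamata--Viehweg--Nadel vanishing theorem (Theorem \ref{a-thm2.4}) as the only cohomological input; since that vanishing is available in arbitrary characteristic for log surfaces, the classical characteristic-zero template should go through. First I would reduce to the case that $S$ is affine, so that $\pi$-generation of $\mathcal O_X(mD)$ becomes the statement that for every closed point $x_0\in X$ the evaluation map $H^0(X,\mathcal O_X(mD))\to \mathcal O_X(mD)\otimes k(x_0)$ is surjective; the target is then to produce, uniformly in $x_0$ and $m$, a global section of $mD$ not vanishing at $x_0$. Write $N:=aD-(K_X+\Delta)$, which is $\pi$-nef and $\pi$-big by hypothesis. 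The elementary but crucial observation is that for every integer $m\geq a$ one has $mD-(K_X+\Delta)=(m-a)D+N$, a sum of a nef class and a nef-and-big class, hence itself nef and big. I note also that, because $\lfloor\Delta\rfloor=0$, the cosupport of $\mathcal J(X,\Delta)$ is a finite set of points: every prime divisor on $X$ enters $\Delta$ with coefficient strictly less than one, so no non-klt center can be one-dimensional.

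The heart of the matter is the basepoint-reduction step at a klt point $x_0$. Here I would use the bigness of $N$ together with an asymptotic Riemann--Roch estimate to produce an effective $\mathbb R$-divisor $M\sim_{\mathbb R}\mu N$, with $0<\mu<1$, whose multiplier ideal $\mathcal J(X,\Delta+M)$ cuts out, after a tie-breaking perturbation, an isolated reduced point at $x_0$, the pair being klt in a punctured neighbourhood. On a surface this tie-breaking is elementary: the non-klt locus is a finite union of points and curves, and by subtracting and re-adding general members and passing to the minimal resolution, where $\mathbb Q$-factoriality is automatic, one isolates a single reduced point. Once $M$ is in hand, for $m\geq a$ the divisor $mD-(K_X+\Delta+M)=(m-a)D+(1-\mu)N$ is again nef and big, so Theorem \ref{a-thm2.4} gives $R^1\pi_*\left(\mathcal O_X(mD)\otimes\mathcal J(X,\Delta+M)\right)=0$. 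Feeding this into the pushforward of $0\to \mathcal J(X,\Delta+M)\to\mathcal O_X\to\mathcal O_X/\mathcal J(X,\Delta+M)\to 0$ twisted by $\mathcal O_X(mD)$ yields a section nonvanishing at $x_0$. At a point where $(X,\Delta)$ is already non-klt, so that $x_0$ lies in the zero-dimensional cosupport of $\mathcal J(X,\Delta)$, the same Nadel vanishing applied with $M=0$ already produces such a section, so those points need no separate construction.

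The main obstacle I anticipate is twofold. The delicate point is reconciling the demand for high multiplicity at $x_0$, needed to force $(X,\Delta+M)$ non-klt there, against the constraint $\mu<1$ needed to keep $mD-(K_X+\Delta+M)$ nef and big: since the achievable multiplicity at $x_0$ of a member of $|tN|$ grows only like $t\sqrt{(N^2)}$, one cannot in general isolate a center with a single fixed $M$, and must instead run the argument with $m$ itself large and borrow positivity from the nef class $(m-a)D$, that is, carry out the reduction through the standard induction on the stable base locus rather than in one stroke. The second, more bookkeeping-heavy obstacle is uniformity: the construction naturally gives freeness of $\mathcal O_X(mD)$ only for sufficiently divisible $m$, and upgrading this to every $m\geq m_0$ requires the usual coprimality argument, namely that freeness for two coprime values forces freeness for all large $m$, which I would check is insensitive to the characteristic of $k$ since it uses nothing beyond Theorem \ref{a-thm2.4}.
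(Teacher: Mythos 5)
Your proposal has a genuine gap, and it is located exactly where you claim the argument is uniform: in the assertion that Theorem \ref{a-thm2.4} ``is available in arbitrary characteristic for log surfaces.'' It is not. Theorem \ref{a-thm2.4} carries the hypothesis that $\dim S\geq 1$ when the characteristic of $k$ is positive, and this hypothesis is not a technicality: Kodaira-type vanishing genuinely fails for projective surfaces over fields of positive characteristic (Raynaud's counterexamples). Consequently your entire scheme --- tie-breaking, Nadel vanishing applied to $mD-(K_X+\Delta+M)$, and the surjection onto the skyscraper at $x_0$ --- collapses in the absolute case, i.e.\ when $\operatorname{char} k>0$ and $S$ is a point. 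Note that reducing to $S$ affine does not exclude this case; it is precisely the hardest one. The paper's proof is a case division for this reason: in the case $\operatorname{char} k>0$, $\dim S=0$, it does not use any vanishing theorem at all, but instead quotes Tanaka's basepoint-free theorem for surfaces in positive characteristic (\cite[Theorem 3.2]{tanaka2}, whose proof rests on Frobenius techniques rather than the X-method template) when $D$ is not numerically trivial, and, when $D$ is numerically trivial, the Ohta--Okawa result \cite[Theorem 1.5]{ohta-okawa} that $\Pic(X)$ is a free abelian group of finite rank for such weak log Fano surfaces, which forces $D\sim 0$. Your method has no replacement for either ingredient: in particular, for numerically trivial $D$ in characteristic $p$ you cannot even produce a single nonzero section, since your only source of sections is the vanishing theorem that is unavailable there.

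In the cases where Theorem \ref{a-thm2.4} does apply (characteristic zero, or $\dim S\geq 1$ in characteristic $p$), your outline is essentially the paper's: the paper also runs the X-method with Theorem \ref{a-thm2.4} plus non-vanishing inputs (Lemmas \ref{a-lem5.2} and \ref{a-lem5.3}), and your closing remarks about induction on the base locus and the coprimality trick match the standard adaptation the paper alludes to. But as written, your proof proves a strictly weaker statement than Theorem \ref{a-thm5.1}, and the missing case is the one that makes the theorem nontrivial in arbitrary characteristic.
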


Before we explain the proof of Theorem \ref{a-thm5.1}, 
we prepare two easy non-vanishing theorems. 

\begin{lem}[Non-vanishing theorem I]\label{a-lem5.2}
Let $C$ be a smooth projective 
curve and let $G$ be a $\mathbb Q$-divisor 
on $C$ such that 
$\lfloor G\rfloor \leq 0$. 
Let $D$ be a nef Cartier divisor on $C$ such that 
$aD-(K_C+G)$ is ample for some positive 
integer $a$. 
Then $H^0(C, \mathcal O_C(mD+\lceil -G\rceil))\ne 0$ for 
every integer $m\geq a$. 
\end{lem}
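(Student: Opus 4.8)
The plan is to reduce the statement to an $H^1$-vanishing together with an application of Riemann--Roch. First I would set $L := mD + \lceil -G\rceil = mD - \lfloor G\rfloor$, using that $\lceil -G\rceil = -\lfloor G\rfloor$ is effective because $\lfloor G\rfloor \leq 0$; in particular $\deg L \geq m\deg D \geq 0$, since $D$ is nef and $m \geq a \geq 1$. The key computation is the identity
$$
L - (K_C + \{G\}) = mD - (K_C + G),
$$
which follows immediately from $G = \lfloor G\rfloor + \{G\}$. Since $aD - (K_C+G)$ is ample and $D$ is nef, for every $m \geq a$ the divisor $mD - (K_C+G) = (m-a)D + (aD - (K_C+G))$ is ample, and hence $L - (K_C + \{G\})$ is ample.

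Next I would establish that $H^1(C, \mathcal O_C(L)) = 0$. Because $\{G\} \geq 0$, ampleness of $L - (K_C+\{G\})$ forces $\deg L > \deg K_C + \deg\{G\} \geq 2g - 2$. On a smooth projective curve this yields the vanishing at once by Serre duality: $H^1(C, \mathcal O_C(L)) \cong H^0(C, \mathcal O_C(K_C - L))^{\vee} = 0$, since $\deg(K_C - L) < 0$. I would prefer this elementary route over invoking Theorem \ref{a-thm2.4}, because that vanishing theorem requires $\dim S \geq 1$ in positive characteristic, whereas here the natural base is a point; the curve computation, by contrast, works uniformly in all characteristics.

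Finally I would conclude by Riemann--Roch. With $H^1$ vanishing,
$$
h^0(C, \mathcal O_C(L)) = \deg L + 1 - g.
$$
When $g \geq 1$, the integral bound $\deg L \geq 2g - 1$ gives $h^0 \geq g \geq 1$. The only case where this estimate is not strict is $g = 0$, where it merely yields $h^0 \geq 0$; there I would instead use the direct observation that $\deg L \geq 0$ on $\mathbb P^1$, so $h^0 = \deg L + 1 \geq 1$. In either case $H^0(C, \mathcal O_C(mD+\lceil -G\rceil)) \neq 0$, as required. I expect the only points demanding any care to be this degenerate low-genus bookkeeping and the need to sidestep the characteristic restriction of Theorem \ref{a-thm2.4}; everything else is formal.
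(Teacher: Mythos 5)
Your proof is correct and follows essentially the same route as the paper: the same key identity $mD+\lceil -G\rceil-(K_C+\{G\})=mD-(K_C+G)$ (ample for $m\geq a$), Riemann--Roch, and a separate direct argument in genus $0$ using $\deg(mD+\lceil -G\rceil)\geq 0$. The only difference is that you first prove $H^1(C,\mathcal O_C(mD+\lceil -G\rceil))=0$ by Serre duality and then apply Riemann--Roch as an equality, whereas the paper avoids any $H^1$-vanishing by simply bounding $\dim H^0\geq \dim H^0-\dim H^1=\deg(mD+\lceil -G\rceil)-g+1>g-1\geq 0$ when $g\geq 1$; your extra step is harmless but not needed.
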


\begin{proof}
If $C\simeq \mathbb P^1$, then $H^0(C, \mathcal O_C(mD+\lceil -G
\rceil))\ne 0$ for every non-negative 
integer $m$ since $\deg D\geq 0$ and $\lceil 
-G\rceil \geq 0$. If $C\not \simeq \mathbb P^1$, then we can easily check 
that $H^0(C, \mathcal O_C(mD+\lceil -G
\rceil))\ne 0$ for every $m\geq a$ by the Riemann--Roch 
formula. 
More precisely, we have 
\begin{equation*}
\begin{split}
&\dim H^0(C, \mathcal O_C(mD+\lceil -G
\rceil))-\dim H^1(C, \mathcal O_C(mD+\lceil -G
\rceil))
\\&=\deg (mD+\lceil -G\rceil)-g+1>
\deg (K_C+\{G\})-g+1\\&\geq 2g-2-g+1=g-1\geq 0 
\end{split}
\end{equation*}
for every $m\geq a$, where $g$ denotes the genus of 
$C$. 
\end{proof}

\begin{lem}[Non-vanishing theorem II]\label{a-lem5.3}
Let $(X, \Delta)$ be a klt surface and let $\pi\colon X\to S$ be 
a proper surjective morphism 
with $\dim S\geq 1$. 
Let $D$ be a $\pi$-nef Cartier divisor on $X$ such that 
$aD-(K_X+\Delta)$ is $\pi$-nef 
and $\pi$-big for some 
positive integer $a$. 
Then there exists a positive integer $m'$ such that 
$\pi_*\mathcal O_X(mD)\ne 0$ for every integer $m\geq m'$. 
\end{lem}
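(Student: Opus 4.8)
The plan is to reduce the statement to a non-vanishing question on a general fibre of $\pi$ and to settle it by relative Riemann--Roch together with the vanishing Theorem \ref{a-thm2.4}, in the same spirit as Lemma \ref{a-lem5.2}. First I would dispose of the case $\dim S=2$: then $\pi$ is generically finite, so $\pi_*\mathcal O_X(mD)$ has generic rank $[K(X):K(S)]\ge 1$ at the generic point $\eta$ of $S$ and is therefore nonzero for every $m$. Hence I may assume $\dim S=1$. Passing to the Stein factorization $X\to S'\to S$ and replacing $S$ by $S'$ (which preserves $\dim S\ge 1$, the $\pi$-nefness and $\pi$-bigness, and the non-vanishing of $\pi_*\mathcal O_X(mD)$), I may also assume that $\pi$ has connected fibres.

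Next, I would fix a general closed point $s\in S$ and set $F:=\pi^{*}s$, a connected projective curve. Since $X$ is a normal surface, its singular locus is finite, so for general $s$ the curve $F$ lies in the smooth locus of $X$; in particular $F$ is a Cartier divisor with $F^2=0$, and adjunction gives $\deg\omega_F=K_X\cdot F$. For $m\ge a$ we have $mD-(K_X+\Delta)=(m-a)D+\big(aD-(K_X+\Delta)\big)$, which is $\pi$-nef and $\pi$-big; as $(X,\Delta)$ is klt we have $\mathcal J(X,\Delta)=\mathcal O_X$, so Theorem \ref{a-thm2.4} yields $R^i\pi_*\mathcal O_X(mD)=0$ for all $i>0$. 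Combining this with generic flatness and the base-change theorem over a dense open subset of $S$, for general $s$ one gets $h^i(F,\mathcal O_F(mD|_F))=0$ for $i>0$ and
\[
\operatorname{rank}_\eta \pi_*\mathcal O_X(mD)=h^0(F,\mathcal O_F(mD|_F))=\chi(F,\mathcal O_F(mD|_F))=m\,(D\cdot F)+\chi(\mathcal O_F).
\]
Thus it suffices to show $m(D\cdot F)+\chi(\mathcal O_F)>0$ for $m\gg0$.

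I would finish by distinguishing two cases according to the value of $D\cdot F\ge 0$ (recall $D$ is $\pi$-nef). If $D\cdot F>0$, the right-hand side tends to $+\infty$, hence is positive for all sufficiently large $m$. The delicate case is $D\cdot F=0$: here the leading term vanishes and I must show $\chi(\mathcal O_F)=1-p_a(F)>0$, i.e. $p_a(F)=0$. Since $aD-(K_X+\Delta)$ is $\pi$-big, its restriction to the general fibre has positive degree; with $D\cdot F=0$ and $\Delta\ge 0$ this forces $K_X\cdot F\le -\Delta\cdot F<0$, whence $2p_a(F)-2=\deg\omega_F=K_X\cdot F<0$ and $p_a(F)=0$ by connectedness of $F$. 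Therefore $\chi(\mathcal O_F)=1>0$ and $\pi_*\mathcal O_X(mD)\ne0$ for $m\ge a$. The main obstacle is precisely this degenerate case $D\cdot F=0$, where non-vanishing is not a numerical accident but must be extracted from $\pi$-bigness via adjunction on a general (hence smooth-locus, $F^2=0$) fibre, exactly the mechanism underlying the genus dichotomy in Lemma \ref{a-lem5.2}; some care is also needed to justify the base-change identification of $h^0(F,-)$ with the generic rank of $\pi_*$ in arbitrary characteristic.
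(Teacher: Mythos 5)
Your proposal is correct, and its skeleton coincides with the paper's: dispose of $\dim S=2$, reduce to a fibration over a curve via Stein factorization, use Theorem \ref{a-thm2.4} (with $\mathcal J(X,\Delta)=\mathcal O_X$ by kltness) to kill $R^i\pi_*\mathcal O_X(mD)$ for $i>0$, and convert the generic rank of $\pi_*\mathcal O_X(mD)$ into an Euler characteristic on a fiber via flatness and base change. Where you genuinely diverge is the degenerate case. The paper first shrinks $S$ so that every fiber is irreducible, observes that flatness then forces $D$ to be either $\pi$-ample (where the claim is obvious) or $\pi$-numerically trivial, and in the latter case applies the vanishing theorem for \emph{every} integer $m$ --- including $m=0$, which is legitimate because $D$ being $\pi$-numerically trivial makes $mD-(K_X+\Delta)$ $\pi$-nef and $\pi$-big for all $m$ --- so that on the generic fiber $F$ one gets $h^0(F,\mathcal O_F(mD))=\chi(F,\mathcal O_F(mD))=\chi(F,\mathcal O_F)=h^0(F,\mathcal O_F)>0$. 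You instead extract the crucial positivity $\chi(\mathcal O_F)>0$ numerically: adjunction on a general closed fiber contained in the smooth locus, combined with $(aD-(K_X+\Delta))\cdot F>0$ coming from $\pi$-bigness, yields $K_X\cdot F<0$. Your route avoids the irreducible-fiber reduction and the ample/numerically-trivial dichotomy; the paper's route avoids adjunction and never touches the geometry of closed fibers (it works only with the generic fiber), which is a genuine convenience in positive characteristic.

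Two repairs to your final paragraph are needed, though neither is fatal. First, the inequality chain should read $K_X\cdot F<-\Delta\cdot F\le 0$: you know $(K_X+\Delta)\cdot F<0$ and $\Delta\cdot F\ge 0$, not $\Delta\cdot F>0$. Second, the step ``$p_a(F)=0$ by connectedness'' tacitly assumes $h^0(F,\mathcal O_F)=1$, which can fail in positive characteristic: the generic fiber need not be geometrically reduced, in which case \emph{every} closed fiber is non-reduced and $p_a(F)$ can a priori be negative, so the conclusion $\chi(\mathcal O_F)=1$ is not justified. Fortunately this intermediate claim is unnecessary: since $F$ lies in the smooth locus and $F^2=0$, adjunction and Serre duality on the Gorenstein curve $F$ give $\chi(\mathcal O_F)=-\tfrac{1}{2}\deg\omega_F=-\tfrac{1}{2}K_X\cdot F>0$ directly, with no reducedness or connectedness input, and positivity of $\chi(\mathcal O_F)$ is all your argument requires.
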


\begin{proof}
If $\dim S=2$, then $\pi_*\mathcal O_X(mD)\ne 0$ obviously 
holds for every integer $m$. 
Therefore, we may assume that 
$S$ is a smooth curve with $\pi_*\mathcal O_X\simeq 
\mathcal O_S$. 
We may further assume that 
every fiber of $\pi\colon X\to S$ is irreducible by shrinking $S$ 
suitably. 
In this case, $D$ is $\pi$-ample or 
$\pi$-numerically trivial since $\pi$ is flat. 
When $D$ is $\pi$-ample, the statement 
is obvious. 
When $D$ is $\pi$-numerically trivial, $mD-(K_X+\Delta)$ is $\pi$-nef 
and $\pi$-big for every integer $m$. 
Therefore, $R^i\pi_*\mathcal O_X(mD)=0$ for 
every integer $m$ and every $i>0$ by Theorem \ref{a-thm2.4}. 
This implies 
that $\pi_*\mathcal O_X(mD)\ne 0$ for every integer 
$m$ since $\pi$ is flat and $\pi_*\mathcal O_X\ne 0$. 
Note that 
$$
\dim H^0(F, \mathcal O_F(mD))=\chi (F, \mathcal O_F(mD))
=\chi (F, \mathcal O_F)=\dim H^0(F, \mathcal O_F)>0
$$ 
holds for every integer $m$, where $F$ is the 
generic fiber of $\pi\colon X\to S$. 
\end{proof}

Let us explain the proof of Theorem \ref{a-thm5.1}. 
We only give a sketch of the proof because 
the arguments are more or less well known. 

\begin{proof}[Sketch of Proof of Theorem \ref{a-thm5.1}]
Here, we will explain how to prove Theorem \ref{a-thm5.1}. 
Without loss of generality, we may assume that 
$S$ is affine and $\pi_*\mathcal O_X\simeq \mathcal O_S$. 
By Kodaira's lemma (see 
\cite[Lemma 2.1.29]{fujino-foundations}) and perturbing 
the coefficients of $\Delta$ slightly, 
we further assume that 
$aD-(K_X+\Delta)$ is $\pi$-ample and 
that $\Delta$ is a $\mathbb Q$-divisor. 
\begin{case}
We assume that the characteristic of the base field $k$ is positive and 
$\dim S=0$. 

If $D$ is not numerically trivial, then the statement follows 
from \cite[Theorem 3.2]{tanaka2}. 
If $D$ is numerically trivial, then $\Pic (X)$ is 
a free abelian group of finite rank by \cite[Theorem 1.5]{ohta-okawa} 
since $-(K_X+\Delta)$ is ample. 
Therefore, we have $D\sim 0$. 
Hence we obtain the desired statement when $\dim S=0$ and 
the characteristic of $k$ is positive. 
\end{case}
\begin{case}
We assume that $(X, \Delta)$ is klt and 
the characteristic of the base field $k$ is zero. 

This case is a special case of 
the well-known basepoint-free theorem 
for klt pairs. 
It can be proved by the traditional X-method, that is, 
a clever application of the Kawamata--Viehweg vanishing 
theorem mainly due to Yujiro Kawamata. 
For the details of 
the X-method, see 
\cite[Section 4.2]{fujino-fundamental}. 
\end{case}
\begin{case}
We assume that $(X, \Delta)$ is not klt. 
We further assume that $\dim S\geq 1$ holds 
when the characteristic of 
the base field $k$ is positive. 

In this case, a slightly modified 
version of the X-method works 
by the vanishing theorem (see Theorem \ref{a-thm2.4}) 
and the non-vanishing theorem (see Lemma \ref{a-lem5.2}). 
There are no difficulties to adapt the arguments in the 
proof of \cite[Theorem 3.2]{tanaka2} for our setting. 
\end{case}
\begin{case}
We assume that the characteristic of 
the base field 
$k$ is positive, $\dim S\geq 1$, and $(X, \Delta)$ is klt. 

In this case, we first use 
Lemma \ref{a-lem5.3} and apply the traditional X-method. 
Then we can obtain the desired basepoint-freeness 
by the vanishing theorem (see Theorem \ref{a-thm2.4}) and 
the non-vanishing theorem (see Lemma \ref{a-lem5.2}). 
\end{case}
Therefore, we obtain the desired basepoint-freeness 
for log surfaces. 
\end{proof}

As a corollary of Theorem \ref{a-thm5.1}, we have 
the following statement. 
Theorem \ref{a-thm5.4} is much sharper than 
Corollary \ref{a-cor4.2} because 
a GMRLC surface is a log surface by definition. 

\begin{thm}[Basepoint-freeness for log surfaces II]\label{a-thm5.4}
Let $(X, \Delta)$ be a log surface and 
let $\pi\colon X\to S$ be a projective morphism onto a variety $S$. 
Let $D$ be a $\pi$-nef Cartier divisor 
on $X$ such that $aD-(K_X+\Delta)$ is 
$\pi$-ample for some positive 
integer $a$. 
Then there exists a positive integer $m_0$ such that 
$\mathcal O_X(mD)$ is $\pi$-generated 
for every integer $m\geq m_0$. 
\end{thm}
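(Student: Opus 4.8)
The plan is to deduce Theorem \ref{a-thm5.4} from the basepoint-free theorem with $\lfloor\Delta\rfloor=0$ (Theorem \ref{a-thm5.1}). The only extra generality here is that $\Delta$ may have components of coefficient one, while in compensation the hypothesis has been strengthened from $\pi$-nef and $\pi$-big to $\pi$-ample, and it is precisely this strengthening that supplies the room needed to absorb $\lfloor\Delta\rfloor$. As in the proof of Theorem \ref{a-thm5.1}, I would first reduce to the case where $S$ is affine, $\pi_*\mathcal O_X\simeq\mathcal O_S$, and $\Delta$ is a $\mathbb Q$-divisor, keeping $aD-(K_X+\Delta)$ $\pi$-ample. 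Note that for every integer $m\geq a$ the divisor $mD-(K_X+\Delta)=(m-a)D+(aD-(K_X+\Delta))$ is the sum of a $\pi$-nef and a $\pi$-ample divisor, hence $\pi$-ample, in particular $\pi$-nef and $\pi$-big.

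When $X$ is $\mathbb Q$-factorial this is immediate: since $\lfloor\Delta\rfloor$ is then $\mathbb R$-Cartier, the divisor $\Delta_\varepsilon:=\Delta-\varepsilon\lfloor\Delta\rfloor=\{\Delta\}+(1-\varepsilon)\lfloor\Delta\rfloor$ satisfies $\lfloor\Delta_\varepsilon\rfloor=0$, and $aD-(K_X+\Delta_\varepsilon)=(aD-(K_X+\Delta))+\varepsilon\lfloor\Delta\rfloor$ remains $\pi$-ample for $0<\varepsilon\ll 1$ by the openness of the relative ample cone. Applying Theorem \ref{a-thm5.1} to $(X,\Delta_\varepsilon)$ then gives the result. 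The essential difficulty is therefore the non-$\mathbb Q$-factorial case, where a component of $\lfloor\Delta\rfloor$ passing through a non-rational (hence non-$\mathbb Q$-factorial) singular point need not be $\mathbb R$-Cartier, so $K_X+\Delta_\varepsilon$ need not be $\mathbb R$-Cartier and the naive perturbation above is unavailable.

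To handle this I would work with the multiplier ideal sheaf $\mathcal J:=\mathcal J(X,\Delta)$ and the closed subscheme $Z:=V(\mathcal J)$ supported on the non-klt locus, which sidesteps the non-$\mathbb R$-Cartier boundary since $\mathcal J$ is always a coherent ideal sheaf. From the exact sequence
\[
0\to\mathcal O_X(mD)\otimes\mathcal J\to\mathcal O_X(mD)\to\mathcal O_Z(mD)\to 0
\]
and the Kawamata--Viehweg--Nadel vanishing theorem (Theorem \ref{a-thm2.4}), applied with $mD-(K_X+\Delta)$ $\pi$-ample for $m\geq a$, one gets $R^1\pi_*(\mathcal O_X(mD)\otimes\mathcal J)=0$ and hence a surjection $\pi_*\mathcal O_X(mD)\twoheadrightarrow\pi_*\mathcal O_Z(mD)$. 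It then remains to prove two generation statements and to combine them: first, that $\mathcal O_Z(mD)$ is $\pi|_Z$-generated for $m\gg 0$, so that via the surjection $\mathcal O_X(mD)$ is $\pi$-generated along $Z$; and second, that $\mathcal O_X(mD)$ is $\pi$-generated on the klt locus $X\setminus Z$. For the first, $Z$ has dimension at most one, and on each reduced curve component $C$ adjunction together with the ampleness of $(aD-(K_X+\Delta))|_C$ forces either $\deg D|_C>0$ or $C\cong\mathbb P^1$ with $D|_C\sim 0$; in both cases $mD|_C$ is basepoint-free for $m\gg 0$ (cf.\ Lemma \ref{a-lem5.2}), and one lifts through the embedded and non-reduced structure of $Z$ using Theorem \ref{a-thm2.4} again. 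For the second, on $X\setminus Z$ the pair is klt, and I would generate at a given point by the traditional X-method, creating an isolated non-klt centre there after adding a small effective $\mathbb Q$-divisor $A'$ with $mD-(K_X+\Delta)-A'$ still $\pi$-ample --- again the ampleness margin is what makes this possible --- and lifting the resulting section by Theorem \ref{a-thm2.4}.

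I expect the main obstacle to be exactly the non-$\mathbb R$-Cartier nature of $\lfloor\Delta\rfloor$ at non-$\mathbb Q$-factorial points: it is what rules out a one-line perturbation reduction to Theorem \ref{a-thm5.1} and forces the multiplier-ideal route. The delicate bookkeeping is then the gluing step --- ensuring that the sections lifted from $Z$ and the sections produced on $X\setminus Z$ together have no common base point --- together with the verification that $\mathcal O_Z(mD)$ is genuinely $\pi$-generated once $Z$ is reducible or non-reduced, for which the positive-degree/adjunction dichotomy on the curve components and the repeated use of the vanishing theorem are the key inputs.
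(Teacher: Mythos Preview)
Your belief that the non-$\mathbb Q$-factorial case ``rules out a one-line perturbation reduction to Theorem \ref{a-thm5.1}'' is exactly what the paper's proof refutes. The paper does reduce to Theorem \ref{a-thm5.1} in a few lines, uniformly in the $\mathbb Q$-factorial and non-$\mathbb Q$-factorial cases, by the following trick. Take a $\pi$-ample Cartier divisor $H$; since $S$ is affine, for $b\gg 0$ the linear system $|\lfloor\Delta\rfloor+bH|$ contains an effective Weil divisor $G$ sharing no component with $\lfloor\Delta\rfloor$. Then set $\Delta':=\Delta-\varepsilon\lfloor\Delta\rfloor+\varepsilon G$ for $0<\varepsilon\ll 1$. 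Because $\Delta'\sim_{\mathbb Q}\Delta+\varepsilon bH$ and $bH$ is Cartier, $K_X+\Delta'$ is again $\mathbb R$-Cartier even though $\lfloor\Delta\rfloor$ itself may not be; moreover $\lfloor\Delta'\rfloor=0$, and $aD-(K_X+\Delta')\sim_{\mathbb R} aD-(K_X+\Delta)-\varepsilon bH$ stays $\pi$-ample (hence $\pi$-nef and $\pi$-big) for small $\varepsilon$. Theorem \ref{a-thm5.1} then applies directly. The point is that one perturbs $\lfloor\Delta\rfloor$ not by scaling it, but by moving it within its linear series after twisting by an ample Cartier divisor --- precisely the ampleness margin you correctly identified as the extra input.

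Your multiplier-ideal route is a genuinely different strategy and could perhaps be pushed through, but as written it has real soft spots: the adjunction dichotomy on curve components of $Z$ presumes smoothness of $C$ and that $\deg D|_C=0$ forces $D|_C\sim 0$, neither of which is automatic when $C$ is singular or passes through non-$\mathbb Q$-factorial points; and ``lifting through the embedded and non-reduced structure of $Z$'' together with the pointwise X-method on $X\setminus Z$ needs an actual induction on the base locus rather than a per-point argument. None of this is needed once you see the linear-equivalence perturbation above.
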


\begin{proof}
Without loss of generality, we may assume that 
$S$ is affine and $\pi_*\mathcal O_X\simeq \mathcal O_S$. 
We take a $\pi$-ample 
Cartier divisor $H$ on $X$. 
Then we can take a large positive integer $b$ and 
an effective Weil divisor $G$ such that 
$\lfloor \Delta\rfloor +bH\sim G\ge 0$ and that 
$\Supp G$ contains no irreducible components of $\lfloor 
\Delta\rfloor$. 
By replacing $\Delta$ with $\Delta-\varepsilon \lfloor \Delta\rfloor 
+\varepsilon G$, 
which is $\mathbb Q$-linearly 
equivalent to $\Delta+\varepsilon bH$, for $0<\varepsilon \ll 1$, we may 
further assume that 
$\lfloor \Delta\rfloor=0$. Then this theorem follows 
from Theorem \ref{a-thm5.1}. 
\end{proof}

For the reader's convenience, we explicitly 
state the cone and contraction theorem for log surfaces 
(see also \cite[Theorem 4.4]{tanaka3}). 

\begin{thm}[Cone and contraction theorem for 
log surfaces]\label{a-thm5.5} 
Let $(X, \Delta)$ be a log surface and let $\pi\colon X\to S$ 
be a projective morphism onto a variety $S$. 
Then the equality 
$$
\NE(X/S)=\NE(X/S)_{K_X+\Delta\geq 0}
+\sum _j R_j
$$ 
and the following hold. 
\begin{itemize}
\item[(i)] $R_j$ is a $(K_X+\Delta)$-negative 
extremal ray of $\NE(X/S)$ for every $j$. 
\item[(ii)] Let $H$ be a $\pi$-ample 
$\mathbb R$-divisor on $X$. 
Then there are only finitely many $R_j$'s 
included in $(K_X+\Delta+H)_{<0}$. 
In particular, the $R_j$'s are discrete 
in the half-space $(K_X+\Delta)_{<0}$. 
\item[(iii)] Let $R$ be a $(K_X+\Delta)$-negative 
extremal ray of $\NE(X/S)$. 
Then there exists a contraction morphism 
$\varphi_R\colon X\to Y$ over $S$ with 
the following properties.  
\begin{itemize}
\item[(a)] Let $C$ be an irreducible curve on $X$ such that 
$\pi(C)$ is a point. Then $\varphi_R(C)$ is a point if and only if $[C]\in R$. 
\item[(b)] $\mathcal O_Y\simeq 
(\varphi_R)_*\mathcal O_X$. 
\item[(c)] Let $\mathcal L$ be a line bundle on $X$ such that 
$\mathcal L\cdot C=0$ for 
every curve $C$ with $[C]\in R$. 
Then there exists a line bundle $\mathcal M$ on $Y$ 
such that $\mathcal L\simeq 
\varphi^*_R\mathcal M$ holds. 
\end{itemize}
\end{itemize}
\end{thm}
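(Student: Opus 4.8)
The plan is to follow the classical Kawamata--Shokurov strategy — rationality theorem, then cone decomposition, then contraction via basepoint-freeness — but to carry it out directly on the possibly non-$\mathbb{Q}$-factorial, possibly non-log-canonical surface $(X,\Delta)$, since we cannot pass to the minimal resolution without creating boundary coefficients greater than one and thereby leaving the category of log surfaces. Throughout I would reduce to the case where $S$ is affine and $\pi_*\mathcal{O}_X\simeq\mathcal{O}_S$. The contraction part (iii) is the ``easy'' half once the cone structure is known: given a $(K_X+\Delta)$-negative extremal ray $R$ equipped with a $\pi$-nef Cartier supporting divisor $D$ (so that $D^{\perp}\cap\NE(X/S)=R$) for which $aD-(K_X+\Delta)$ is $\pi$-ample for some positive integer $a$, Theorem \ref{a-thm5.4} makes $\mathcal{O}_X(mD)$ $\pi$-generated for $m\gg0$, and the associated morphism over $S$, after Stein factorization, is the desired $\varphi_R\colon X\to Y$. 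Property (a) is immediate from $D^{\perp}\cap\NE(X/S)=R$; property (b) holds because $Y\simeq\operatorname{Proj}_S\bigoplus_m\pi_*\mathcal{O}_X(mD)$, so $(\varphi_R)_*\mathcal{O}_X\simeq\mathcal{O}_Y$; and property (c) follows by the standard device of applying Theorem \ref{a-thm5.4} to both $mD+\mathcal{L}$ and $mD-\mathcal{L}$ (both $\pi$-nef for $m\gg0$ and both with $\pi$-ample difference from $K_X+\Delta$), which factor through the same contraction and hence force $\mathcal{L}$ to descend to $Y$.

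The technical heart is therefore the cone decomposition, and for this the key input is a rationality theorem: fixing a $\pi$-ample $\mathbb{Q}$-Cartier divisor $H$, the number $r=\sup\{t\colon H+t(K_X+\Delta)\ \text{is}\ \pi\text{-nef}\}$ is rational with bounded denominator whenever $K_X+\Delta$ is not $\pi$-nef. I would prove this through the usual non-vanishing/vanishing dichotomy, feeding the Kawamata--Viehweg--Nadel vanishing of Theorem \ref{a-thm2.4} against the non-vanishing statements of Lemma \ref{a-lem5.2} and Lemma \ref{a-lem5.3}; this is exactly the surface-theoretic, arbitrary-characteristic argument of Tanaka, and I would adapt the proof of \cite[Theorem 4.4]{tanaka3} rather than reprove it from scratch. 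From the rationality theorem the decomposition $\NE(X/S)=\NE(X/S)_{K_X+\Delta\geq0}+\sum_j R_j$, together with (i) and the local finiteness asserted in (ii), follows by the standard formal argument: the faces of the cone cut out in the $(K_X+\Delta)$-negative region are rational, the extremal rays can accumulate only toward the hyperplane $(K_X+\Delta)=0$, and the ample perturbation $K_X+\Delta+H$ isolates finitely many of them. Feeding (ii) back into (iii), the rationality theorem produces for each such ray a rational supporting function whose denominators can be cleared to yield the Cartier divisor $D$ with $D^{\perp}\cap\NE(X/S)=R$ and $aD-(K_X+\Delta)$ $\pi$-ample, closing the loop with the contraction step above.

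The main obstacle I anticipate is positive characteristic together with $\dim S=0$, precisely the case excluded from Theorem \ref{a-thm2.4}. There the vanishing theorem is unavailable, so the rationality/non-vanishing machinery breaks down and one must argue directly on $X$; this is the situation treated in Case 1 of the proof of Theorem \ref{a-thm5.1}, where finiteness of $\Pic(X)$ via \cite[Theorem 1.5]{ohta-okawa} — using that $-(K_X+\Delta)$ is ample on the relevant configuration — replaces the vanishing input and forces the supporting divisor to be $\sim 0$. Keeping the argument uniform across this characteristic-dependent case split, while never assuming $X$ is $\mathbb{Q}$-factorial or $(X,\Delta)$ log canonical, is where the real care is needed; the remaining deductions are routine and purely formal.
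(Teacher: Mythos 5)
Your contraction step (iii) is essentially the paper's own argument: given the cone statement, take a $\pi$-nef Cartier divisor $D$ with $R=D^{\perp}\cap\NE(X/S)$ and $aD-(K_X+\Delta)$ $\pi$-ample, apply Theorem \ref{a-thm5.4} and Stein factorization to get $\varphi_R$, and descend line bundles; your variant for (c), using $mD+\mathcal L$ and $mD-\mathcal L$, is a legitimate alternative to the paper's device of applying Theorem \ref{a-thm5.4} relatively over $Y$. The problem lies in how you propose to prove the cone statement itself.

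The gap is exactly the case you flagged---positive characteristic with $\dim S=0$---and your patch for it is circular. The tools you invoke there, namely \cite[Theorem 3.2]{tanaka2} and the finiteness of $\Pic(X)$ from \cite[Theorem 1.5]{ohta-okawa} as used in Case 1 of the proof of Theorem \ref{a-thm5.1}, are basepoint-freeness statements: their hypotheses already include a nef Cartier divisor $D$ with $aD-(K_X+\Delta)$ ample. Nef supporting divisors of this kind are precisely what the rationality theorem and the cone decomposition are supposed to produce, so they cannot serve as input to (i) and (ii); moreover, the Ohta--Okawa result requires $-(K_X+\Delta)$ to be ample, a weak log Fano hypothesis with no analogue when one merely assumes $K_X+\Delta$ is not $\pi$-nef and asks whether the nef threshold of $H$ against $K_X+\Delta$ is rational, or whether the negative extremal rays are discrete. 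Your characterization of the positive-characteristic surface cone theorem as the vanishing/non-vanishing dichotomy is also inaccurate on this very point: Kodaira-type vanishing genuinely fails for projective surfaces over a field of positive characteristic (which is why Theorem \ref{a-thm2.4} excludes $\dim S=0$ there), and the actual proofs in \cite{tanaka} and \cite{fujino-tanaka} are vanishing-free and geometric---one reduces to the smooth or $\mathbb Q$-factorial case, uses Mori's bend-and-break cone theorem for $K_Y$ (valid in all characteristics), and observes that a curve $C$ with $(K_Y+\Delta_Y)\cdot C<0\leq K_Y\cdot C$ satisfies $\Delta_Y\cdot C<0$, hence is one of the finitely many components of $\Delta_Y$. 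This is why the paper does not take your route: it quotes \cite[Theorem 6.2]{tanaka} for (i) and (ii) in positive characteristic (and \cite[Theorem 3.2]{fujino-surfaces}, a special case of \cite[Theorem 1.1]{fujino-fundamental}, in characteristic zero), and reserves the basepoint-freeness argument for (iii) alone, where assuming a nef supporting divisor is legitimate. To repair your proposal, either do the same, or supply a genuinely vanishing-free proof of the cone statement in the absolute positive-characteristic case along the geometric lines above; your remaining cases (characteristic zero, or $\dim S\geq 1$ in characteristic $p$) can indeed be handled by the rationality argument you sketch.
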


We note that the cone and contraction theorem 
holds for log surfaces $(X, \Delta)$ without any extra assumptions 
on $(X, \Delta)$. 

\begin{proof}[Sketch of Proof of Theorem \ref{a-thm5.5}]
If the characteristic of the base field $k$ is zero, 
then Theorem \ref{a-thm5.5} follows 
from \cite[Theorem 3.2]{fujino-surfaces}, 
which is a special case of \cite[Theorem 1.1]{fujino-fundamental}. 
Although \cite[Theorem 3.2]{fujino-surfaces} (see 
also \cite[Theorem 1.1]{fujino-fundamental}) is 
stated only for the case where $k=\mathbb C$, it holds 
true under the assumption that 
the characteristic of $k$ is zero. 
This is because the Kodaira-type vanishing theorems 
in \cite{fujino-fundamental} hold true when the 
characteristic of $k$ is zero by the Lefschetz principle. 
For the details of the cone and contraction theorem 
in characteristic zero, we strongly recommend the 
reader to see \cite{fujino-fundamental}. 

From now on, we assume that 
the characteristic of $k$ is positive. 
Then the cone theorem, that is, (i) and (ii), holds true 
by \cite[Theorem 6.2]{tanaka}. 
Therefore, it is sufficient to prove the contraction theorem (iii). 
We can take a $\pi$-nef Cartier divisor $D$ on $X$ such that 
$R=\NE(X/S)\cap D^{\perp}$ by (ii). 
Then $aD-(K_X+\Delta)$ is $\pi$-ample for some 
positive integer $a$. 
By Theorem \ref{a-thm5.4}, there exists a positive 
integer $m_0$ such that 
$\mathcal O_X(mD)$ is $\pi$-generated for every 
integer $m\geq m_0$. 
We take the Stein factorization of the associated 
morphism. 
Then we have a contraction morphism 
$\varphi_R\colon X\to Y$ over $S$ satisfying (a) and (b). 
By construction, $-(K_X+\Delta)$ is $\varphi_R$-ample. 
Therefore, $\mathcal L-(K_X+\Delta)$ is also 
$\varphi_R$-ample since $\mathcal L$ is relatively numerically 
trivial over $Y$. 
By Theorem \ref{a-thm5.4} again, 
we see that there exists 
a positive integer $n_0$ such that 
$\mathcal L^{\otimes n}$ is $\varphi_R$-generated 
for every integer $n\geq n_0$. 
This implies (c). 
\end{proof}

We note that $R_j$ in Theorem \ref{a-thm5.5} 
is spanned by a rational curve. 

\begin{thm}[{Extremal rational curves, 
see \cite[Proposition 3.8]{fujino-surfaces}}]
\label{a-thm5.6}
Let $(X, \Delta)$ be a log surface and let $\pi\colon X\to S$ be a projective 
morphism onto a variety $S$. 
Let $R$ be a $(K_X+\Delta)$-negative extremal ray of $\NE(X/S)$. 
Then $R$ is spanned by a rational curve $C$ on $X$ such that 
$$0<-(K_X+\Delta)\cdot C\leq 3. 
$$ 
Moreover, if $X\not\simeq \mathbb P^2$, then we can 
choose $C$ with $$0<-(K_X+\Delta)\cdot C\leq 2. 
$$ 
\end{thm}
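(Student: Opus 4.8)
The plan is to reduce the statement to the production of a single rational curve inside $R$ and then to estimate its degree case by case after contracting $R$. First I would apply the contraction theorem (Theorem \ref{a-thm5.5}) to obtain the contraction $\varphi_R\colon X\to Y$ over $S$ associated with $R$. By construction $-(K_X+\Delta)$ is $\varphi_R$-ample and the relative Picard number $\rho(X/Y)$ is one, so any curve contracted by $\varphi_R$ has class in $R$; hence it suffices to find \emph{one} rational curve $C$ with $\varphi_R(C)$ a point and $0<-(K_X+\Delta)\cdot C\leq 3$. I would organise the argument according to $\dim Y\in\{0,1,2\}$.

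When $\dim Y=1$, the contraction $\varphi_R$ is a fibration over a curve. I would take $F$ to be a general fibre, which is a Cartier divisor with $F^2=0$ and $F\not\subset\Supp\Delta$. Adjunction on the normal surface $X$ gives $-K_X\cdot F=2-2p_a(F)$, while $\Delta\cdot F\geq0$ and the $\varphi_R$-ampleness of $-(K_X+\Delta)$ give $-K_X\cdot F\geq-(K_X+\Delta)\cdot F>0$; thus $p_a(F)=0$, $F\simeq\mathbb P^1$, and $0<-(K_X+\Delta)\cdot F\leq-K_X\cdot F=2$. When $\dim Y=2$, the contraction is birational and its exceptional locus is a single irreducible curve $E$ with $E^2<0$. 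Writing $\Delta=aE+\Delta'$ with $0\leq a\leq1$ and $E\not\subset\Supp\Delta'$, I would combine the adjunction inequality $(K_X+E)\cdot E\geq-2$ with $E^2<0$ to force $p_a(E)=0$ and to bound $-(K_X+\Delta)\cdot E\leq2$; the only delicate point here is the bookkeeping of the coefficient $a$ when $E\subset\Supp\Delta$, which the inequality $-E^2>0$ keeps under control. In neither of these cases does the value $3$ occur.

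The essential case is $\dim Y=0$, where $Y$ is a point, $\rho(X)=1$, and $-(K_X+\Delta)$ is ample, so that \emph{every} curve on $X$ spans $R$. Here I would produce the rational curve by Mori's bend-and-break technique, which is available in arbitrary characteristic through the Frobenius argument underlying \cite{tanaka} and \cite{fujino-surfaces}. Starting from a curve $C_0\not\subset\Supp\Delta$ with $-K_X\cdot C_0\geq-(K_X+\Delta)\cdot C_0>0$ and deforming it through a general point $x\notin\Supp\Delta$, bend-and-break yields a rational curve $C$ through $x$ with $0<-K_X\cdot C\leq\dim X+1=3$. Since $C$ passes through a point off $\Supp\Delta$ it is not contained in $\Supp\Delta$, whence $\Delta\cdot C\geq0$ and $0<-(K_X+\Delta)\cdot C\leq-K_X\cdot C\leq3$. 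Finally, equality $-(K_X+\Delta)\cdot C=3$ forces $-K_X\cdot C=3$ and $\Delta\cdot C=0$, and the classification of surfaces of Picard number one carrying a rational curve with $-K_X\cdot C=3$ and $C^2>0$ isolates $X\simeq\mathbb P^2$; outside this case the bound improves to $\leq2$. This reproves \cite[Proposition 3.8]{fujino-surfaces}.

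The step I expect to be the main obstacle is the bend-and-break production of the rational curve in the case $\dim Y=0$: one must run the deformation-and-degeneration argument on a possibly singular surface $X$ and in positive characteristic, where the usual curve-deformation estimates are most delicate. A secondary difficulty is the clean extraction of the $\mathbb P^2$ exception, namely certifying that the extremal length $3$ is attained by no other surface; I would settle this by the numerical classification of Picard-number-one surfaces admitting a rational curve $C$ with $-K_X\cdot C=3$ and $C^2>0$. The remaining verifications—the characteristic-$p$ rationality of a general fibre in the case $\dim Y=1$ and the coefficient bookkeeping for $E$ in the birational case—are comparatively routine.
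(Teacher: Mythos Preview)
Your case analysis by $\dim Y\in\{0,1,2\}$ is reasonable, and the adjunction arguments for $\dim Y=1$ and $\dim Y=2$ are essentially correct. The divergence from the paper, and the genuine gap, are both in the case $\dim Y=0$.

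The paper's proof is a one-line citation: the argument of \cite[Proposition~3.8]{fujino-surfaces} carries over verbatim once the vanishing theorem used there is replaced by the characteristic-free version \cite[Theorem~6.2]{fujino-tanaka}. That argument does \emph{not} invoke bend-and-break; the rational curve of bounded degree is produced via vanishing and Riemann--Roch considerations, and in particular the references you cite for ``the Frobenius argument underlying \cite{tanaka} and \cite{fujino-surfaces}'' are not supplying a bend-and-break input. So your route in the $\rho(X)=1$ case is genuinely different from the paper's.

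More seriously, your proposed isolation of the $\mathbb P^2$ exception does not work. You assert that a normal projective surface $X$ with $\rho(X)=1$ carrying a rational curve $C$ with $-K_X\cdot C=3$ and $C^2>0$ must be $\mathbb P^2$. Take $X=\mathbb P(1,2,3)$ and $C=\{z=0\}\in|\mathcal O_X(3)|$. Then $\rho(X)=1$, $K_X=\mathcal O_X(-6)$ is Cartier, $C\cong\mathbb P(1,2)\cong\mathbb P^1$, and since $\mathcal O_X(1)^2=\tfrac{1}{6}$ one has $-K_X\cdot C=6\cdot 3\cdot\tfrac{1}{6}=3$ and $C^2=\tfrac{3}{2}>0$. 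The theorem remains true for $(\mathbb P(1,2,3),0)$ because the curves in $|\mathcal O_X(2)|$ through a general point satisfy $-K_X\cdot C=2$; but nothing in your bend-and-break step selects those over the $|\mathcal O_X(3)|$ curves. Thus the numerical classification you propose is false as stated, and the sharpening from $3$ to $2$ when $X\not\simeq\mathbb P^2$ is left unproved. Combined with the acknowledged issue that Mori's bound $-K_X\cdot C\le\dim X+1$ is established for smooth $X$ and does not transfer to a singular normal surface without further argument, the $\dim Y=0$ case needs a different mechanism; the vanishing-theorem route taken in \cite{fujino-surfaces} avoids both obstacles.
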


\begin{proof}
By the vanishing theorem in \cite[Theorem 6.2]{fujino-tanaka}, 
which holds true over any algebraically closed field $k$ 
of arbitrary characteristic, 
the proof of \cite[Proposition 3.8]{fujino-surfaces} 
works without any changes. 
\end{proof}

Finally, we give a sketch of the proof of Theorem \ref{a-thm1.5}, which 
is the main result of this short paper. 

\begin{proof}[Sketch of Proof of Theorem \ref{a-thm1.5}]
Since $(X, \Delta)$ is a log surface that is projective 
over $S$, 
we can apply the cone and contraction theorem 
to $(X, \Delta)$ (see Theorem \ref{a-thm5.5}). 
By Lemma \ref{a-lem4.3}, 
we can run the minimal model program under the assumption that 
$(X, \Delta)$ is GMRLC. 
After finitely many steps, the minimal model program terminates. 
By construction, $K_X+\Delta$ is pseudo-effective 
over $S$ if and only if so is $K_{X^*}+\Delta^*$. 
By the abundance theorem (see Theorem \ref{a-thm4.1}), 
$K_{X^*}+\Delta^*$ is semi-ample over $S$ when 
$K_{X^*}+\Delta^*$ is nef over $S$. 

If $(X, \Delta)$ is log canonical, then we can easily see that 
$(X_i, \Delta_i)$ is also log canonical 
for every $i$ by the negativity lemma. 
Hence (1) is the classical minimal model theory for log canonical 
surfaces. For the details, see \cite{fujino-surfaces} and 
\cite{tanaka} (see also \cite{fujino-tanaka}). 

The case where $(X, \Delta)$ is a $\mathbb Q$-factorial log surface 
(see (2)) was established in characteristic $0$ and 
$p>0$ in \cite{fujino-surfaces} and \cite{tanaka}, 
respectively. 

If $(X, \Delta)$ is MRLC, then $(X_i, \Delta_i)$ is also 
MRLC by Lemma \ref{a-lem4.3}. 
Therefore, (3) holds. 
Note that (3) is the minimal model program sketched 
in \cite[Section 10]{alexeev}. 
\end{proof}

Theorem \ref{a-thm1.5} covers the minimal model 
theory for log canonical surfaces, $\mathbb Q$-factorial 
log surfaces, and 
MR log canonical surfaces. 

\medskip 

We note that some results in this short paper were already discussed 
in a more general setting by Hiromu Tanaka. 
We recommend the interested reader to see \cite{tanaka3}. 
We also note that the minimal model theory established in \cite{fujino-surfaces} 
was generalized for log surfaces in Fujiki's class $\mathcal C$ in 
\cite{fujino-fujiki}. 

\end{document}